\author{Fedor Pavutnitskiy}
\address{Department of Mathematics, National University of Singapore, 10 Lower Kent Ridge Road, Singapore 119076}
\email{fedor@u.nus.edu}
\urladdr{}
\author{Jie Wu}
\email{matwuj@nus.edu.sg}
\urladdr{math.nus.edu.sg/~matwujie}
  \title[Simplicial James-Hopf map and mod-$p$ UASS for suspensions]{Simplicial James-Hopf map and decompositions of the unstable Adams spectral sequence for suspensions}
\newtheorem{thm}{Theorem}[section]  
\newtheorem{lem}[thm]{Lemma}        
\newtheorem{cor}[thm]{Corollary}
\theoremstyle{definition}
\newtheorem{defin}[thm]{Definition}   
\newtheorem*{rem}{Remark}             
\renewcommand{\Z}{\mathbb Z}
\renewcommand{\hom}{\mathrm{Hom}}
\renewcommand{\L}{\EuScript L}
\newcommand{\g}{\gamma}
\newcommand{\gp}{\gamma^{[p]}}
\newcommand{\ot}{\otimes}
\newcommand{\mono}{\: \ar@{^{(}->}}
\newcommand{\epi}{\ar@{->>}}
\renewcommand{\ker}{\mathrm{ker}\,}
\renewcommand{\lim}{\mathrm{lim}}
\newcommand{\im}{\mathrm{im}}
\newcommand{\xym}{\xymatrix}
\newcommand{\ra}{\, \Rightarrow\, }
\newcommand{\e}{\varepsilon}
\newcommand{\w}{\wedge}
\newcommand{\dlg}{\langle\langle}
\newcommand{\drg}{\rangle\rangle}
\renewcommand{\D}{\Delta}
\renewcommand{\top}{\mathrm{Top}_*}
\newcommand{\sset}{\mathrm{sSet}_*}
\renewcommand{\k}{\mathbf{k}}
\begin{document}

\begin{abstract}    
We use combinatorial group theory methods to extend the definition of a classical James-Hopf invariant to a simplicial group setting. This allow us to realize certain coalgebra idempotents at $\sset$-level and obtain a functorial decomposition of the spectral sequence, associated with the lower $p$-central series filtration on the free simplicial group.
\end{abstract}

\maketitle

\section{Introduction}

One of the ultimate goals in unstable homotopy is to understand how the loops-over-suspension functor $\Omega\Sigma$ changes the homotopy type of the space. One aspect of such understanding is the computation and investigation of the homotopy groups of loop spaces and one of the most powerful methods for it is an unstable Adams spectral sequence first introduced in \cite{6authors}. Construction of this spectral sequence starts with a free simplicial group model for a loop space, equipped with a lower $p$-central series filtration, which produces the spectral sequence itself. So framework for this machinery is a category of (pointed) simplicial sets $\sset$. Computations show that for a loop space over spheres, $p$-locally unstable Adams spectral sequence is accelerated in a sense that non-trivial elements on its first page are concentrated in columns with numbers $p^k$ (see \cite{6authors}).

Another approach to investigating loop-suspension functor $\top \to \top$ through functorial direct product decompositions was introduced in \cite{SelickWuCoalg}. It uses significantly a certain model for $\Omega\Sigma X$ in $\top$ - James construction $JX$ - a reduced free monoid on points of $X$ together with a word-length filtration $J_k X$ on it. Among all ($p$-local) decompositions of the form 
\begin{equation}\label{classicalDecomposition}
\Omega\Sigma X\simeq_p A\times B
\end{equation} there is a minimal one, called $A^{min}$ in a sense that it is obtained from a minimal coalgebra retract of a tensor algebra functor $T$ - which is homological representation of an $\Omega\Sigma$. It can be shown that primitive elements of such minimal coalgebra retract are concentrated in the degrees $p^k$.

The aim of the present paper is to translate the above mentioned decompositions to a category of simplicial sets and furthermore to extend them to a level of spectral sequences. From this point of view primitive elements of $A^{min}$ form a first page of a functorial sub-spectral sequence of unstable Adams spectral sequence for suspensions and therefore result about acceleration of spectral sequence for $\Omega S^{n+1}$ becomes a particular case (since $A^{min}(S^n)=S^n$) of a more general situation.

We will briefly recall the idea of the loop-suspensions decompositions as in \cite{SelickWuCoalg}. Let $\k=\Z/p$ be a ground field, then by Bott-Samelson theorem homology of $\Omega\Sigma X$ with coefficients in $k$ is given by the tensor algebra $T(\bar H_*(X))$ on the reduced homology on $X$, i.e. functorially there is a map
\begin{equation}\label{classicalRealization}
[\Omega\Sigma,\Omega\Sigma]\xrightarrow{H_*}\hom_{\mathrm{Coalg}}(T,T)
\end{equation}
By geometric realization theorem in \cite{SelickWuCoalg} this map is surjective, and in fact all natural coalgebra maps $f:T\to T$ can be realized as an elements in a certain subgroup of $[J,\Omega\Sigma]$ called Cohen group $H_{\infty}$ first introduced in \cite{CohenComb}. Now given any coalgebra idempotent $f:T\to T$ it can be lifted to a space level to obtain a map $\varphi: \Omega\Sigma\to \Omega\Sigma, \ \varphi_*=f$ and $A=\mathrm{hocolim}\,\varphi$ will give a first piece of decomposition (\ref{classicalDecomposition}), second piece is given similarly by a homotopy mapping telescope of a complement map $[\mathrm{id}]\varphi^{-1}$.

To adapt this picture to $\sset$, one need a suitable analogue of the Cohen group $H_\infty$ which will realize a coalgebra idempotents on the level of simplicial sets. In $\top$ Cohen group is defined as a subgroup of $[J,\Omega\Sigma]$, generated as a pro-group by compositions 
\begin{equation}\label{classicalGenerators}
JX\xrightarrow{H_k}JX^{\w k}\xrightarrow{J(\bar\Delta)}JX^{\w l}\xrightarrow{J(\sigma)}J(X^{\w l})\xrightarrow{W_l}\Omega\Sigma X
\end{equation}
first and a last map here is combinatorial James-Hopf map and Whitehead product respectively (see definition (\ref{classicalJH}) below). The most straightforward way to translate $H_\infty$ to a $\sset$ is to pass all objects through geometric realization/singular chains functors. Unfortunately, heaviness construction of $|-|-\mathrm{Sing}$ adjunction hides combinatorial nature of $H_{\infty}$, so preservation of lower $p$-central series by its elements looks untraceable under this approach.

Instead we will define generators of Cohen group as a certain natural transformations of functors on pointed sets and will extend this definition to a $\sset$ level-wisely. The main object of interest here is Milnor's construction $F[-]$ - simplicial group model for a loop-suspension functor $\Omega\Sigma$. Our first goal is to extend the combinatorial James-Hopf map from free monoid to a free group framework. It is achieved by translating Hilton-Hopf map to simplicial level and applying Hall's commutator collection process inside simplicial version of Hilton-Milnor's theorem. The resulting definition (\ref{newJH}) have all desired properties of a classical James-Hopf map and moreover can be considered as a non-abelian version of Magnus embedding \cite{MagnusRepr}. Since everything happens level-wisely, Whitehead product can be considered just as iterated commutator map, permutations and iterated reduced diagonal maps stays the same as in (\ref{classicalGenerators}).

With help of Fox differential calculus it will be shown that elements of Cohen group preserve lower central series on free simplicial group $F[-]$ and therefore induce a map between lower ($p$-)central towers of fibrations. Extending these maps to an exact couple level will equip unstable Adams spectral sequence with an action of Cohen group.

Such action used to obtain a functorial decomposition of this spectral sequence in a following way. As before, one starts with a functorial coalgebra idempotent $f:T\to T$, which is now extended to a natural self-transformation of functor $T(\k[-]):\sset\to\mathrm{sCoalg}_{\k}$. By simplicial realization theorem (\ref{simplicialRealizationTheorem}) this natural transformation can be lifted to a self-map of l.c.s. tower and each level of the corresponding tower can be decomposed through a mapping telescope construction in a usual way. Such decomposition preserves the fibrations inside the l.c.s. tower, which can be therefore presented as a direct product of two towers, with layers weak equivalent to primitive elements of coalgebra retracts of $T(\k[-])$, corresponded to idempotent $f$ and its complement (theorem (\ref{towerDecomposition}). Choosing an idempotent which corresponds to a minimal coalgebra retract ($A^{min}$ subfunctor of $T$) will produce a direct summand of l.c.s. tower with non-trivial layers only at degrees $p^k$. Therefore corresponding spectral sequence will be accelerated in the same sense as in \cite{6authors}.

The paper is organized as follows. Extension of the James-Hopf invariants to the simplicial group level (definition (\ref{newJH})) together with motivation of such extension and connection with Magnus embedding (theorem (\ref{JHandMagnus})) is given in Section 2. Preservation of lower $p$-central series by elements of Cohen group discussed in theorem (\ref{JHLowerCentral}) and the construction itself is given in definition (\ref{CohenGroupGenerators}) of Section 3. Section 4 is devoted to applications of action of Cohen group on spectral sequence (\ref{actionOnSS}): decomposition of spectral sequence (\ref{towerDecomposition}) and (\ref{SSDecompositionCorollary})) and acceleration of the part that correspond to a minimal coalgebra decomposition (\ref{acceleration})

\section{James-Hopf map for Milnor's construction}
$\mathrm{Set}_*$ will denote the category of pointed sets (with distinct point $*$). In this category \textit{wedge sum} $\xym{X\vee Y=X\cup Y/*_X\sim *_Y\mono[r]& X\times Y}$ and \textit{smash product} $X\w Y=\frac{X\times Y}{X\vee Y}$ of pointed spaces $X$ and $Y$ are defined. Category of groups $\mathrm{Grp}$ will also be considered pointed, with identity element of each object as a distinct point. Then there is a pair of adjoint functors 
\begin{equation*}
F:\mathrm{Set}_*\rightleftarrows \mathrm{Grp}:U
\end{equation*}
where $U$ is forgetful functor and $F$ is a \textit{reduced} free group functor, that's it, for any pointed set $X$ (with distinct point $*$), $FX$ is a free group on $X$, quotient out by relation $*=1$. The corresponding monad $UF:\mathrm{Set}_*\to \mathrm{Set}_*$ and its extension to the category of pointed simplicial sets $\sset$ will be denoted by $F[-]$ and called \textit{Milnor's construction}.

The similar monad, obtained from adjoint functors between $\mathrm{Set}_*$ and category of monoids $\mathrm{Mon}$ will be denoted by $J$ - \textit{James construction}, which form a subfunctor of $F[-]$.
\begin{rem}
Traditionally, James construction is defined for topological space as a free reduced topological monoid on points of topological space. In the present paper James construction applied to simplical set levelwise. Moreover, various functors and natural transformations between them will be considered for $\mathrm{Set}_*$ and $\sset$ simultaneously, whenever it can not lead to the confusion.
\end{rem}
\begin{defin}\label{classicalJH}
The $k$-th \textit{(classical) combinatorial James-Hopf map} is a natural transformation of functors
\begin{equation*}
H_k: J\to J(-)^{\w k}
\end{equation*}
such that for any pointed set $X$, $H_k: JX\to JX^{\w k}$ defined as
\begin{equation*}
H_k(x_1\dots x_n)=\prod_{i_1<\dots<i_k}x_{i_1}\w \dots \w x_{i_k}
\end{equation*}
where product is taken over all subsequences in $1..n$ (without repetitions of indices) arranged in the (right) lexicographic order. By functoriality the same map can be defined for simplicial sets levelwise.
\end{defin}

The aim of this section is to extend the definition (\ref{classicalJH}) in a natural way to the natural transformation $F[-]\to F[(-)^{\w n}]$ and give a motivation for such definition along with some basic properties. We will denote this extension by the same letter $H_n$ whenever it cannot lead to a confusion. 
\begin{defin}
The $n$-th \textit{combinatorial James-Hopf} map is a natural transformation
\begin{equation*}
H_n: F[-]\to F[(-)^{\w n}]
\end{equation*}
defined for any pointed (simplicial) set $X$ on reduced words as 
\begin{equation*}\label{newJH}
H_k(x_1^{\e_1}\dots x_n^{\e_n})=\prod_{(i_1\dots i_k)} (x_{i_1}\w\dots\w x_{i_k})^{\e_{i_1}\dots \e_{i_k}}
\end{equation*}
here product is taken in (right) lexicographical order over sequences of indices $(i_1\dots i_k)$ such that 
$$
i_j\leq i_{j+1}-\frac{\e_{i_{j+1}}+1}{2}
$$
that's it, product is taken over all subsequences $(i_1\dots i_k)$ of $(1\dots n)$ with possible repetition of indices and repetition of index $i_j$ occurs only if corresponding exponent $\e_{i_j}$ is negative.
\end{defin}
Two immediate remarks here:
\begin{enumerate}[\upshape(i)]
\item By construction (see commutator collection process below) $H_k$ is well-defined for all (non necessary reduced) words in $F[X]$, for example:
\begin{equation*}
*=H_k(*)=H_k(x x^{-1})=(x\w\dots\w x)^{(-1)^{k-1}}(x\w\dots\w x)^{(-1)^k}=*
\end{equation*}
\item $H_k$ is a natural extension of a combinatorial James-Hopf map for James construction:
\begin{equation}\label{connectionWithClassicalJH}
\xym{
F[X]\ar[r]^{H_k}&F[X^{\wedge k}]\\
J(X)\mono[u]\ar[r]^{H_k}&J(X^{\wedge k}\mono[u])
}
\end{equation}
\end{enumerate}

To motivate this definition we follow the idea that Curtis apparently used in \cite{CurtisSimpl} to define a Hopf map $H_2$ in simplicial EHP sequence.

In $\top$ there is an another map $h_k: \Omega\Sigma X\to \Omega\Sigma X^{\w k}$ called \textit{Hilton-Hopf invariant} which is very close to a classical James-Hopf invariant $H_k$ (in a sense that it induces the same map on homology and can be used in EHP-sequence), but defined only for co-$H$-spaces $X$, see \cite{Neisen}. We will use an extension of this definition to whole $\top$ and afterwards translate it to $\sset$. To define $h_k$ first recall the statement of the Hilton-Milnor's theorem as in \cite{Neisen}. For connected spaces $X$ and $Y$ loop-suspension over their wedge can be decomposed as a product
\begin{equation}\label{HiltonMilnor}
\Omega\Sigma (X\vee Y)\simeq \Omega\Sigma X\times \Omega\Sigma(\bigvee_{n\geq 0} Y\w X^{\w n})
\end{equation}
The second summand can be decomposed further and iterated application of this theorem gives the following decomposition
\begin{equation}\label{HiltonMilnorIterate}
\Omega\Sigma(X\vee Y)\simeq \prod_{\omega\in \EuScript B (X,Y)}\Omega\Sigma (\omega(X,Y))
\end{equation}
here $\EuScript B (X,Y)$ denotes the Hall basis of free Lie algebra on two letters $X$ and $Y$ and so each summand $\omega(X,Y)$ can be identified with a certain commutator bracket on two letters, written down as a smash product.

Let $X$ be a co-$H$-space with comultiplication $\mu'$. For the matter of convenience we will denote by $A$ and $B$ two identical copies of $X$ With this notation comultiplication will be a map $\mu': X\to A\vee B$ and Hilton-Hopf map $h_k$ is defined as a composition
\begin{equation}\label{HiltonHopfDefinition}
\xym{
\Omega\Sigma X\ar[d]_{\Omega\Sigma \mu'}\ar[r]^{h_k}&\Omega\Sigma X^{\w k}\\
\Omega\Sigma(A\vee B)\ar[r]^-{\sim}& \prod_{\omega\in \EuScript B (A,B)}\Omega\Sigma(\omega(A,B))\ar[u]_{\pi_{B\w\dots\w A}}
}
\end{equation}
here lower horizontal map is an inverse of the weak equivalence in Hilton-Milnor's theorem (\ref{HiltonMilnorIterate}), right vertical map is a projection to summand, corresponded to Lie bracket $[B,A]\dots],A]$.

To extend $h_k$ to whole $\top$ we will introduce another map $\Omega\Sigma X\to \Omega\Sigma (X\vee X)$, as in \cite{CurtisSimpl}, which is homotopy equivalent to $\Omega\Sigma \mu'$ in case of co-$H$-space $X$. Let $i_A$ and $i_B$ be two inclusions of $X$ in $\Omega\Sigma(A\vee B)$, then $i_A * i_B$ is defined as a product of these inclusions in $\Omega\Sigma(A\vee B)$, i.e. extension of the composition
\begin{equation}\label{HHMapComposition}
X\xrightarrow{\Delta} X\times X\xrightarrow{i_A\times i_B}\Omega\Sigma(A\vee B)\times \Omega\Sigma(A\vee B)\xrightarrow{\mu} \Omega\Sigma (A\vee B)
\end{equation}
to an $H$-map $\Omega\Sigma X\to \Omega\Sigma A\vee B$
\begin{lem}\label{HiltonHopfExtension}
For co-$H$-space $X$ 
\begin{equation*}
i_A*i_B \simeq \Omega\Sigma \mu'
\end{equation*}
\end{lem}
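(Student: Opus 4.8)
The plan is to reduce the statement to the bottom cell by invoking the universal property of the James construction: up to homotopy $\Omega\Sigma X$ is the free associative (grouplike) $H$-space on the pointed space $X$, so for every loop space precomposition with the unit $E_X\colon X\to\Omega\Sigma X$ of the adjunction $\Sigma\dashv\Omega$ induces a bijection
\[
[\Omega\Sigma X,\Omega\Sigma(A\vee B)]_H\xrightarrow{\ \cong\ }[X,\Omega\Sigma(A\vee B)],\qquad f\mapsto f\circ E_X,
\]
on homotopy classes of $H$-maps. Both maps in the statement are $H$-maps into the loop space $\Omega\Sigma(A\vee B)$: the map $i_A*i_B$ is one by construction, being defined as the $H$-extension of (\ref{HHMapComposition}), while $\Omega\Sigma\mu'=\Omega(\Sigma\mu')$ is a loop map and hence an $H$-map for the loop multiplications. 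It therefore suffices to show that the two maps become homotopic after precomposition with $E_X$; the resulting $H$-homotopy then yields the homotopy asserted in the lemma.

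First I would identify the two restrictions. By construction $(i_A*i_B)\circ E_X$ is precisely the composite (\ref{HHMapComposition}), that is $P\circ\Delta_X$, where $\Delta_X\colon X\to X\times X$ is the diagonal and $P\defeq\mu\circ(i_A\times i_B)\colon A\times B\to\Omega\Sigma(A\vee B)$ sends $(a,b)$ to the loop product $i_A(a)\cdot i_B(b)$. On the other hand, naturality of the unit $E$ gives $\Omega\Sigma\mu'\circ E_X=E_{A\vee B}\circ\mu'$. Hence the lemma reduces to the single identity $P\circ\Delta_X\simeq E_{A\vee B}\circ\mu'$ of maps $X\to\Omega\Sigma(A\vee B)$.

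To establish this I would isolate two elementary facts and let $j\colon A\vee B\hookrightarrow A\times B$ denote the inclusion. The first is that $P\circ j\simeq E_{A\vee B}$: restricting $P$ to the two wedge summands and using the unit of the loop multiplication gives $P(a,*)\simeq i_A(a)$ and $P(*,b)\simeq i_B(b)$, and $i_A$, $i_B$ are by definition the restrictions of $E_{A\vee B}$ to $A$ and $B$. The second is that $j\circ\mu'\simeq\Delta_X$: the two projections of $j\circ\mu'\colon X\to A\times B=X\times X$ are the composites of $\mu'$ with the retractions $A\vee B\to A$ and $A\vee B\to B$, which by the co-$H$ axiom for $\mu'$ are each homotopic to $\mathrm{id}_X$, and a pointed map into a product is detected up to homotopy by its projections. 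Combining the two, $E_{A\vee B}\circ\mu'\simeq(P\circ j)\circ\mu'=P\circ(j\circ\mu')\simeq P\circ\Delta_X$, which is exactly the required identity.

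The one external ingredient is the freeness statement that $H$-maps out of $\Omega\Sigma X$ are detected on the bottom cell; once it is granted, the rest is formal. Accordingly the point to watch is the model of $\Omega\Sigma$ carrying the multiplication $\mu$: with a strict monoid model (James construction or Moore loops) the unit identities $P(a,*)=i_A(a)$ and $P(*,b)=i_B(b)$ hold on the nose, while in a general model they hold only up to coherent homotopies that must be carried through the comparison. I expect this bookkeeping, rather than any conceptual difficulty, to be the main obstacle, since the genuinely geometric input is confined to the single step $j\circ\mu'\simeq\Delta_X$, which is the only place where the co-$H$ hypothesis on $X$ enters.
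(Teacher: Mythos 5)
Your proposal is correct and follows essentially the same route as the paper: the paper also invokes uniqueness of the $H$-extension of (\ref{HHMapComposition}) and then reduces to checking that the restrictions to $X$ agree, via the commutativity of the diagram involving $X\vee X\hookrightarrow X\times X$. You merely spell out explicitly (the factorization through $j\colon A\vee B\hookrightarrow A\times B$, the co-$H$ axiom giving $j\circ\mu'\simeq\Delta_X$, and the unit identities giving $P\circ j\simeq E_{A\vee B}$) what the paper leaves as ``can be checked explicitly.''
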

\begin{proof}
Since extension of (\ref{HHMapComposition}) to the $H$-map is unique up to homotopy, the statement follows from the commutativity of the following diagram, which can be checked explicitly:
\begin{equation*}
\xym{
X\ar[rd]_{\mu'}\ar[r]^-{\Delta}& X\times X\ar[r]^-{i_A\times i_B}&\Omega\Sigma(A\vee B)\times \Omega\Sigma(A\vee B)\ar[r]^-{\mu}& \Omega\Sigma (A\vee B)
\\
&X\vee X\mono[u]\mono[urr]
}
\end{equation*}

\end{proof}
Now we can translate definition (\ref{HiltonHopfDefinition}) to a simplicial group setting:
\begin{equation}\label{motivationOfJH}
\xym{
F[X] \ar[d]_{i_A*i_B}\ar[r]^{H_k}& F[X]^{\w k}\\
F[A]* F[B]\ar[r]^-{\simeq}& \prod_{\omega\in \EuScript B (A,B)}F[\omega(A,B)]\ar[u]_{\pi_{B\w A\w\dots \w A}}
}
\end{equation}
here  $*$ denotes free product $F[A]*F[B]\cong F[A\vee B]$ and left vertical map sends generators $x_i$ to $a_i b_i$ - product of generators of $F[A]$ and $F[B]$.

In $\top$ Hilton-Milnor's decomposition given by a certain weak equivalence $\Omega\Sigma X\times \Omega\Sigma(\bigvee_{n\geq 0} Y\w X^{\w n})\to \Omega\Sigma (X\vee Y)$ is constructed from iterated Whitehead products $[i_Y,i_X]\dots],i_X]$, so inverse to it (lower horizontal arrow) is somewhat mysterious. The situation is opposite in $\sset$: the map from free product $F[A] * F[B]\to \prod_{\omega\in \EuScript B (A,B)}F[\omega(A,B)]$ can be seen as a collection process in sense of Hall (see \cite{Hall}), this allow us to describe the map purely combinatorially and it is turns such map is a natural generalization of combinatorial James-Hopf map from free monoid to a free group. We briefly recall procedure of commutator collection applied to a free product $F[A] * F[B]$ from \cite{CurtisSimpl}. First for simplicity we assume that $A$ (elements denoted by $a_i$) and $B$ (elements denoted by $b_j$) are just pointed sets, after that the process can be extended to a simplicial sets by just applying it level-wise.

There is a natural projection $F[A]*F[B]\to F[A]$ (forgetting all $b_j$) with a section, induced by inclusion $A\to A\vee B$ and by Schreier's lemma the kernel is given by $F[B]*F[B\w F[A]]$, here free group on wedge $B\w F[A]$ is naturally identified as a subgroup of $F[A] * F[B]$ generated by commutators $[b,w] \ b\in B,\, w\in F[A]$ .The free summand $F[B\w F[A]]$ can be decomposed further using Tietze transformations ($m$ is any number)
\begin{align}\label{Tietze}
F[B\w F[A]]\cong F[B\w A]*F[B\w A\w F[A]]\cong\dots\\ \nonumber \dots\cong F[B\w A]*F[B\w A\w A]*\dots * F[B\w A^{\w m}\w F[A]]
\end{align}
this chain of isomorphisms can be seen as a process of expanding commutators of the form $[b,aw]$ as $[b,w][b,a][b,a,w], \ b\in B, \, a\in A, \, w\in F[A]$
So, for any $m$ after forgetting group structure there is a map 
\begin{align}\label{HMbijection}
F[A]*F[B]\to F[A]\times F[B]*F[B\w A]*\dots \\ \nonumber \dots * F[B\w A^{\w m}]* F[B\w A^{\w m}\w F[A]]\to F[A]\times F[\bigvee_{j=0}^m B\w A^{\w j}]
\end{align}
given by (set-theoretic) retraction $F[A]*F[B]\to F[B]*F[B\w F[A]]$, composed with a chain of isomorphisms (\ref{Tietze}) and projection away from $F[B\w A^{\w m}\w F[A]]$. Hilton-Milnor's theorem for simplicial sets states that this map becomes a weak equivalence as $m\to \infty$ since connectivity of pieces $F[B\w A^{\w m}\w F[A]]$ grows as $m$ increases. Same procedure can be applied to $F[B]*F[B\w F[A]]$ to split off $F[B]$ as direct summand and so on. Sequence of such procedures can be seen as Hall's commutator collection process which described in details in \cite{Hall} and \cite{CurtisSimpl}. Here we only sketch the basic idea.

Let $w=a_1^{\e_1}b_1^{\eta_1}\dots a_n^{\e_n}b_n^{\eta_n}$ be a reduced word in $F[A]*F[B], \ \e_i,\, \eta_j=\pm 1$. For simplicity first assume that all $\e_i=\eta_i=1$. By pushing all $a_i$ in the beginning of the word $w$, starting with $a_1$, using identity 
$$
c a_i=a_i c [c,a_i], \ c=b_j \ \mathrm{or} \ c=[b_j,a_{i_1}\dots a_{i_k}]
$$

word $w$ can be written as $w=w_A w'$, where $w_A=a_1^{\e_1}\dots a_n^{\e_n}$ and $w'$ consist of $b_j$ and commutators of the form $[b_j,a_{i_1}]\dots ]a_{i_k}]$, so $w'\in F[B\w F[A]]$ and mapping $w\to w'$ gives a retraction $F[A]*F[B]\to F[B]*F[B\w F[A]]$. 
In case of arbitrary exponents the following commutation rules should be applied:
\begin{align}
&c^{-1}a=a[c,a]^{-1}c^{-1}\label{firstRule}\\
&c a^{-1}=a^{-1}c[c,a^{-1}]=a^{-1}c[c,a]^{-1}[c,a,a^{-1}]^{-1}=\dots\label{secondRule}\\
&c^{-1} a^{-1}=a^{-1}[c,a^{-1}]^{-1}c^{-1}=a^{-1}[c,a,a^{-1}][c,a]c^{-1}=\dots\label{thirdRule}
\end{align}
in the second and third rule here commutator identity $[x,y^{-1}]=[x,y]^{-1}[x,y,y^{-1}]^{-1}$ is applied to take negative exponent out of commutator (to move from $F[B\w F[A]]$ to $F[B\w A]*F[B\w A\w F[A]]$). Also note that exponents of commutators that lies inside $F[B\w A]$ are products of exponents of $c$ and $a$.

Described process collect (arrange in (right) lexicographic order in the beginning of the word $w$) commutators of weight 0 (i.e. generators $a_i$), next iteration of Hilton-Milnor's theorem will collect  all $b_j$, $[b_j,a_i]$ and so on. Now, using retraction mentioned above, we are ready to describe composition (\ref{motivationOfJH}).
\begin{rem}
Since in (\ref{motivationOfJH}) the last map is a projection to summand $F[B\w A\w \dots \w A]$ (taking commutators of the form $[b_j,a_{i_1}\dots a_{i_k}]$), to define $H_n$ only one retraction $w \to w'$ is needed:
\begin{equation}\label{oneIter}
\xym{
F[X]\ar[d]_{i_1 * i_2}\ar[rr]^{H_n}&&F[X^{\w n}]\\
F[A\vee B]\ar[r]^-{\rho}&F[\bigvee_{j=0}^kB\w A^{\w j}]*F[B\w A^{\w k} \w F[A]]\ar[r]&\prod_{j=0}^k F[B\w A^{\w j}]\times F[B\w A^{\w k}\w F[A]]\ar[u]_{\pi_{B\w A\dots\w A}}
}
\end{equation}
Difference between (\ref{motivationOfJH}) and (\ref{oneIter}) is in order of commutators in each $F[B\w A^{\w k}]$. To ensure that new definition of $H_k$ will contain old one for words with positive exponents, commutators needed to be arranged in (right) lexicographical order, which happens in further applications of Hilton-Milnor's decomposition. So in some sense (which will be made precise, see theorem (\ref{JHandMagnus}) below) $H_k$ defined uniquely only up to order of elements in the product in $F[B\w A^{\w k}]$
\end{rem}

Fix $k>0$ and consider $H_k(x_1^{\e_1}\dots x_n^{\e_n})$ as in (\ref{motivationOfJH}). The image of $x_1^{\e_1}\dots x_n^{\e_n}\in F[X]$ under $i_1*i_2$ is $a_1^{\e_1}b_1^{\e_1}\dots a_n^{\e_n}b_n^{\e_n}\in F[A]*F[B]$. During collection process each $a_i^{\e_i}$ will give rise to commutators of the form $[b_j,a_{i_1}\dots a_{i_l},a_i]^{\e_j \e_{i_1}\dots \e_{i_l}\e_i}$, where $j\leq i_1\leq \dots \leq i_l\leq i$ and equality in this sequence appears only if corresponding exponent is $-1$ (rules (\ref{secondRule}) or (\ref{thirdRule}) applied). After arranging such commutators for all $i$ lexicographically and taking that of length $k$ we get a definition (\ref{newJH}).

Now we will show that product of all $H_k$ can be seen as a non-abelian version of Magnus representation \cite{MagnusRepr}. Let $X$ be a pointed set and $\Z\dlg X\drg$ be a ring of (non-commutative) formal power series on elements of $X$. \textit{Magnus representation} $\mu$ is a homomorphism from free group on $X$ to $\Z\dlg X\drg$ defined on generators as
\begin{equation*}
\mu : F[X]\to \Z\dlg X\drg, \  \ x \mapsto x+1
\end{equation*}
By theorem of Magnus \cite{MagnusRepr} this map is injective.

Ring $\Z\dlg X\drg$ can be identified with an infinite product ($\bar \Z[-]=\Z[-]/\Z[*]$)
$$
\prod_{n=0}^{\infty}\bar\Z[X]^{\otimes n}\cong \prod_{n=0}^{\infty}\bar\Z[X^{\w n}]
$$
and for any $k$ there are projections $\pi_k:\Z\dlg X\drg\to \bar\Z[X^{\w n}]$ which maps formal power series to its elements of length $k$. By \textit{length} of formal power series we mean the minimum among all lengths of its elements.
\begin{thm}\label{JHandMagnus}
there is a commutative diagram
\begin{equation}\label{JHandMagnusDiagr}
\xym{
F[X]\mono[r]^{\mu}\ar[d]_{H_k}& \Z\dlg X\drg\ar[d]^{\pi_k}\\
F[X^{\w k}]\epi_{\mathrm{ab}}[r]&\bar\Z[X^{\w k}]
}
\end{equation}
\end{thm}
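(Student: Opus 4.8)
The plan is to verify commutativity of~(\ref{JHandMagnusDiagr}) by direct evaluation on a reduced word, since neither $\pi_k$ nor $H_k$ is a homomorphism and both composites are merely maps of pointed sets. Fix a reduced word $w=x_1^{\e_1}\dots x_n^{\e_n}\in F[X]$ and chase it along the two paths. Along the top-right path, $\mu$ is a homomorphism, so $\pi_k\mu(w)=\pi_k\bigl(\prod_{i=1}^n(1+x_i)^{\e_i}\bigr)$, where a negative factor is expanded as the geometric series $(1+x_i)^{-1}=\sum_{m\geq 0}(-1)^m x_i^m$ inside $\Z\dlg X\drg$. First I would describe the degree-$k$ part of this product explicitly: a monomial of total degree $k$ is obtained by selecting from each factor $i$, read left to right, a power $x_i^{m_i}$ with $\sum_i m_i=k$, subject to $m_i\in\{0,1\}$ when $\e_i=+1$ and $m_i\geq 0$ (weighted by $(-1)^{m_i}$) when $\e_i=-1$.

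Writing the resulting letters as an index sequence $i_1\leq\dots\leq i_k$, this description says exactly that $i_j=i_{j+1}$ can occur only inside a block coming from a negative factor, and that the coefficient of $x_{i_1}\ot\dots\ot x_{i_k}$ is $\prod_i \e_i^{m_i}=\prod_{j=1}^k\e_{i_j}$ (here I would check that the series sign $(-1)^{m_i}$ agrees with $\e_i^{m_i}$ for $\e_i=-1$). Under the identification $\bar\Z[X]^{\ot k}\cong\bar\Z[X^{\w k}]$ sending $x_{i_1}\ot\dots\ot x_{i_k}$ to $x_{i_1}\w\dots\w x_{i_k}$, this is precisely a sum over admissible index sequences with coefficients $\e_{i_1}\dots\e_{i_k}$.

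Along the left-bottom path, the definition~(\ref{newJH}) gives $H_k(w)=\prod_{(i_1\dots i_k)}(x_{i_1}\w\dots\w x_{i_k})^{\e_{i_1}\dots\e_{i_k}}$, and applying abelianization turns the ordered product into the unordered sum $\sum_{(i_1\dots i_k)}\e_{i_1}\dots\e_{i_k}\,(x_{i_1}\w\dots\w x_{i_k})$ in $\bar\Z[X^{\w k}]$. I would then observe that the indexing condition $i_j\leq i_{j+1}-\frac{\e_{i_{j+1}}+1}{2}$ unwinds to $i_j<i_{j+1}$ when $\e_{i_{j+1}}=+1$ and to $i_j\leq i_{j+1}$ when $\e_{i_{j+1}}=-1$, that is, strictly increasing indices except that a repetition is permitted exactly at a negatively-signed index. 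This is literally the index set produced by the Magnus computation, and the coefficients match term by term, so the two composites coincide.

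The step I expect to be most delicate is the sign-and-repetition bookkeeping for negative exponents: matching the geometric-series expansion $(1+x_i)^{-1}=\sum(-1)^m x_i^m$ against the collection rules~(\ref{secondRule})--(\ref{thirdRule}), so that repeated indices arise precisely from negative factors and the sign $(-1)^{m_i}$ is accounted for by $\e_{i_j}$. Conceptually the cleanest organizing principle is that the total map $\prod_k(\ab\circ H_k)\colon F[X]\to\Z\dlg X\drg$ sends each generator $x$ to $1+x$ and is multiplicative, whence it must equal $\mu$ by Magnus' uniqueness~\cite{MagnusRepr}; however, proving multiplicativity (a Cartan-type formula for the combinatorial James-Hopf map) amounts to the same bookkeeping, so I would present the direct word-by-word comparison above and record the homomorphism interpretation as the reason abelianization resolves the order-ambiguity in $H_k$ flagged in the preceding remark.
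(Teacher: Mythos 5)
Your proof is correct and follows essentially the same route as the paper: both arguments reduce to comparing, on a reduced word $w=x_1^{\e_1}\dots x_n^{\e_n}$, the degree-$k$ part of $\mu(w)$ with the abelianization of $H_k(w)$, and both identify the same admissible index sequences (non-decreasing, with repetitions permitted only at negatively-signed letters) carrying the same coefficients $\e_{i_1}\dots\e_{i_k}$. The only difference is presentational: the paper reads off the coefficients of $\mu(w)$ as augmentations of higher Fox derivatives via formula (3.5) of \cite{FoxFDCI}, whereas you re-derive the same combinatorics directly from the geometric-series expansion of $(1+x_i)^{-1}$, and your sign-and-repetition bookkeeping is accurate.
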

\begin{proof}
Define $H_0:F[X]\to \Z, \, x\mapsto 1$ and $H=\prod_{k=0}^{\infty}:F[X]\to \prod_{k=0}^{\infty}F[X^{\w k}]$. Then the statement follows from commutativity of the following diagram
$$
\xym{
F[X]\mono[r]^{\mu}\ar[d]_H&\Z\dlg X\drg\\
\prod_{k=0}^{\infty}F[X^{\w k}]\ar[ur]_{\mathrm{ab}}
}
$$
For any $w=x_1^{\e_1}\dots x_n^{\e_n}\in F[X]$ coefficients of power series $\mu(w)\in\Z\dlg X\drg$ can be described as augmentations of Fox derivatives (see \cite{FoxFDCI})
\begin{equation}\label{MagnusFox}
\mu(w)=\sum_{k=0}^{\infty}\sum_{(i_1\dots i_k)}\e\partial^k_{i_1\dots i_k}(w)x_{i_1}\dots x_{i_k}
\end{equation}
here we use shorten notation $\partial^k_{i_1\dots i_k}(w)$ for Fox derivative $\frac{\partial^k w}{\partial x_{i_1}\dots \partial x_{i_k}}$. Each of these given by formula ((3.5) in \cite{FoxFDCI})
$$
\partial^k_{i_1\dots i_k}(w)=\sum_{(\lambda_1\dots\lambda_k)}\e_{\lambda_1}\dots \e_{\lambda_k}
$$
with summation over all sequences of indices $(\lambda_1\dots\lambda_k)$ such that $i_{\lambda_j}=i_j$ for all $j=1\dots k$ and
$$
\lambda_j\leq \lambda_{j+1}-\frac{\e_{\lambda_{j+1}}+1}{2}
$$
comparison it with definition (\ref{newJH}) gives the result.
\end{proof}
\begin{cor}
\begin{enumerate}[\upshape(i)]
\item $H:F[X]\to \prod_{k=0}^{\infty}F[X^{\w k}]$ is injective
\item Let $\gamma_n$ be a $n$-th term of lower central series of $F[X]$ and $\L^n(X)\cong\gamma_n/\gamma_{n+1}$ be a $n$-th Lie power, viewed as $n$-th homogeneous component of associated graded object, corresponded to lower central series filtration on $F[X]$. $\bar H_n$ will denote composition of $H_n$ with abelianization map $F[X]^{\w n}\to \bar \Z[X]^{\otimes n}$. Then the following diagram commute
\begin{equation*}
\xym{
\gamma_n\mono[r]\epi[d]&F[X]\ar[d]^{\bar H_n}\\
\L^n(X)\mono[r]&\bar\Z[X]^{\otimes n}
}
\end{equation*}
i.e. $\bar H_n$ sends group commutators to ring commutators
\end{enumerate}
\end{cor}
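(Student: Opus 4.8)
The plan is to deduce both parts from Theorem~\ref{JHandMagnus}, which identifies $\bar H_k=\mathrm{ab}\circ H_k$ with $\pi_k\circ\mu$, together with the classical Magnus--Witt description of the lower central series of a free group through the augmentation filtration on $\Z\dlg X\drg$.

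Part~(i) is essentially free. The proof of Theorem~\ref{JHandMagnus} already factors the Magnus embedding as $\mu=\mathrm{ab}\circ H$, where $\mathrm{ab}\colon\prod_{k}F[X^{\w k}]\to\prod_{k}\bar\Z[X^{\w k}]\cong\Z\dlg X\drg$ is the componentwise abelianization followed by the standard identification. Since $\mu$ is injective by Magnus's theorem, so is $H$: if $H(w)=H(w')$ then $\mu(w)=\mathrm{ab}(H(w))=\mathrm{ab}(H(w'))=\mu(w')$, whence $w=w'$.

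For part~(ii) I would first record that, under the identification $\bar\Z[X^{\w n}]\cong\bar\Z[X]^{\otimes n}$, Theorem~\ref{JHandMagnus} gives $\bar H_n=\pi_n\circ\mu$. The key input is the Magnus--Witt characterization of the lower central series: writing $I\subset\Z\dlg X\drg$ for the two-sided ideal of series with vanishing constant term, one has $w\in\gamma_n$ precisely when $\mu(w)\in 1+I^n$, i.e.\ $\mu(w)=1+\pi_n\mu(w)+(\text{terms of degree}\geq n+1)$. In particular $\bar H_n$ annihilates $\gamma_{n+1}$, whose Magnus images begin in degree $\geq n+1$, so $\bar H_n|_{\gamma_n}$ descends to a map $\L^n(X)=\gamma_n/\gamma_{n+1}\to\bar\Z[X]^{\otimes n}$ which makes the square commute by construction. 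This induced map is injective, since $\pi_n\mu(w)=0$ for $w\in\gamma_n$ forces $\mu(w)\in 1+I^{n+1}$ and hence $w\in\gamma_{n+1}$, matching the monomorphism on the bottom row.

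It then remains to check the two algebraic properties of the leading-term map $w\mapsto\pi_n\mu(w)$. Additivity $\pi_n\mu(uv)=\pi_n\mu(u)+\pi_n\mu(v)$ for $u,v\in\gamma_n$ follows at once from multiplying $\mu(u)=1+U+\cdots$ and $\mu(v)=1+V+\cdots$ with $U,V$ concentrated in degree $n$, so the induced map is a homomorphism. For $u\in\gamma_r$ and $v\in\gamma_s$, expanding $\mu([u,v])=\mu(u)\mu(v)\mu(u)^{-1}\mu(v)^{-1}$ with $\mu(u)^{-1}=1-U+\cdots$ and $\mu(v)^{-1}=1-V+\cdots$ shows that the lowest nonzero homogeneous part of $\mu([u,v])-1$ sits in degree $r+s$ and equals the ring commutator $[U,V]=UV-VU$. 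Taking $r+s=n$ yields exactly the assertion that $\bar H_n$ carries group commutators to ring commutators and identifies the bottom arrow with the canonical embedding of the $n$-th free Lie component into the tensor ring. I expect the commutator leading-term computation to be the only delicate point --- one must confirm that the degree-$r$ and degree-$s$ cross terms cancel and that $[U,V]$ genuinely survives in degree $r+s$ --- but this is a standard Magnus computation that may be invoked directly, as may the Magnus--Witt identification of $\gamma_n$ with $F[X]\cap(1+I^n)$.
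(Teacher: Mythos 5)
Your proposal is correct and follows essentially the same route as the paper: both parts are deduced from Theorem \ref{JHandMagnus} via the factorization $\mu=\mathrm{ab}\circ H$ together with the classical Magnus--Witt facts that $\gamma_n=F[X]\cap(1+I^n)$ and that the degree-$n$ leading terms of $\mu(\gamma_n)$ realize $\L^n(X)$. The paper simply cites \cite{MagnusRepr} for the latter, whereas you spell out the leading-term and commutator computations explicitly; the content is the same.
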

\begin{proof}
first statement is immediate, second one follows from the fact that elements of length $n$ in $\mu(\gamma_n)$ forms $\L^n(X)$, see \cite{MagnusRepr}.
\end{proof}
\section{Cohen group for the lower $p$-central series tower}
In this section we define a Cohen group as a subgroup of natural transformations of functors $F[-]\to F^{\w}[-]$ which acts naturally on the lower $p$-central series tower, and therefore on the corresponding spectral sequence.

Let $X$ be a (pointed) simplicial set and $p$ be a prime. Recall that lower central series $\gamma_n$ and lower $p$-central series $\gp_n$ of $F[X]$ defined as a simplicial subgroups of $F[X]$ such that in each dimension $k$ they are usual lower ($p$-)central series of a group $F[X]_k$:
\begin{align}
(\gamma_n F[X])_k=\langle\{[g_1\dots g_n] \ | \ g_l\in F[X_k]\}\rangle\\
(\gp_n F[X])_k=\langle\{[g_1\dots g_i]^{p^j}\ | \ g_l \in F[X_k], \ ip^j\geq n\}\rangle
\end{align}
$\gamma_n$ and $\gp_n$ will be considered as a subfunctors of $F[-]$.

Both series leads to a (functorial with respect to $X$) towers of fibrations of simplicial sets (denoted $\Gamma(X)$ and $\Gamma_p(X)$ respectively):
\begin{equation}\label{towers}
\xym{
F^{\w}[X]=\varprojlim F[X]/\g_n\ar[d]&&F_p^{\w}[X]=\varprojlim F[X]/\gp_n\ar[d]&\\
\vdots\ar[d]&&\vdots\ar[d]&\\
F[X]/\g_{n+1}\epi[d]&\L^n(\bar\Z[X])\mono[l]&F[X]/\gp_{n+1}\epi[d]&\L_{res}^n(\bar\Z/p[X])\mono[l]\\
F[X]/\g_n\epi[d]&&F[X]/\gp_n\epi[d]&\\
\vdots\epi[d]&&\vdots\epi[d]&\\
F[X]/\g_2=\bar\Z[X]&&F[X]\gp_2=\bar\Z/p[X]
}
\end{equation}

Fibers of (\ref{towers}), denoted by $E^0\Gamma$ and $E^0\Gamma_p$ can be identified with Lie powers and restricted Lie powers on $X$ and both towers leads to spectral sequences (convergence is due to Curtis \cite{CurtisSS})
\begin{equation}\label{spectralSeq}
E^1_{p,q}=\pi_p \L^q(\bar\Z[X])\ra \pi_{p+q}F[X], \ E^1_{p,q}=\pi_p \L^q(\bar\Z/p[X])\ra \pi_{p+q}F[X]\otimes \Z/p
\end{equation}
\begin{rem}
Traditionally spectral sequence, constructed from lower $p$-central series filtration on Kan construction $GX$ is called \textit{unstable Adams spectral sequence} and was introduced in \cite{6authors} (accelerated version) and \cite{Rector}. Spectral sequence (\ref{spectralSeq}) is isomorphic to an unstable Adams spectral sequence for $G(\Sigma X)$ by a natural isomorphism of simplicial groups $F[X]\to G(\Sigma X)$.
\end{rem}

(Levelwise) \textit{Whitehead product} $W_n$ for Milnor's construction is a homomorphism (a natural transformation, in fact) $W_n:F[X^{\w n}]\to F[X]$ defined on generators as
\begin{equation*}
W_n(x_1\w\dots\w x_n)=[x_1,x_2]\dots],x_n]
\end{equation*}

Now we can construct an analogue of Cohen group (\ref{classicalGenerators}) for simplicial sets as a subgroup of $\hom_{\sset}(F[-],F^{\w}[-])$ - (topological) group (under pointwise multiplication) of all simplicial natural transformations between Milnor's construction and its pro-nilpotent completion. Reason of choosing pro-nilpotent completion here is that original Cohen group $H_{\infty}$ was constructed as a pro-group and therefore contains certain infinite products, which can be crucial for realizing coalgebra idempotents. 

\begin{defin}
\textit{Cohen group} $ H_{\infty}$ is a closure of a subgroup of $\hom_{\sset}(F[-],F^{\w}[-])$ generated by compositions (evaluated at arbitrary $X$)
\begin{equation}\label{CohenGroupGenerators}
F[X]\xrightarrow{H_k}F[X^{\w k}]\xrightarrow{F[\bar\Delta]}F[X^{\w l}]\xrightarrow{F[\sigma]}F[X^{\w l}]\xrightarrow{W_l}F[X]\to F^{\w}[X]
\end{equation}
for all positive $k\leq l$ and permutations $\sigma\in\Sigma_l$. Here $H_k$ is James-Hopf map (\ref{newJH}), $\bar\Delta: X^{\w k}\to X^{\w l}$ is iterated reduced diagonal, $\sigma:X^{\w l}\to X^{\w l}$ permuting factors in smash product and $W_l$ is a Whitehead product.
\end{defin}

To check that elements of $H_{\infty}$ induces maps between towers $\Gamma\to \Gamma$ and $\Gamma_p\to \Gamma_p$ it is sufficient to check that generators (\ref{CohenGroupGenerators}) preserves $\g_n$ and $\gp_n$ and since $F(\bar\Delta), \, F(\sigma)$ and $W_l$ are homomorphisms only preservation of lower ($p$-)central series by $H_k$ needed to be checked. Unfortunately, this is not a case as simplest example shows:
\begin{equation*}
H_2([x,y])=[(x\w x)^{-1},(x\w y)^{-1}][(y\w y)^{-1},(y\w x)](y\w x)^{-1}(x\w y)
\end{equation*}
To overcome this difficulty we define weighted lower central series filtration on $F[X^{\w m}]$ and show that James-Hopf map preserve lower ($p$-)central series in this weighted sense, which is sufficient to induce a maps of towers $\Gamma$ and $\Gamma_p$.
\begin{defin}
\textit{Weighted lower ($p$-)central series }$\g^w_n (F[X^{\w m}])$ and $\gamma^{[p], w}_n (F[X^{\w m}])$ are sequences of simplicial subgroups of $F[X]$ defined as follows:
\begin{enumerate}[\upshape(i)]
\item if $n<m$ then $\g^w_n (F[X^{\w m}])=F[X^{\w m}]$
\item if $n\geq m$. i.e. $n=qm+s$, $s<m$
\begin{equation*}
\g^w_n (F[X^{\w m}])=\g_{\lceil \frac{n}{m}\rceil} (F[X^{\w m}])=\begin{cases} \g_q(F[X^{\w m}]), & \mbox{if } s=0 \\ \g_{q+1}(F[X^{\w m}]), & \mbox{if } s>0 \end{cases}
\end{equation*}
and similarly for $\gamma^{[p], w}_n (F[X^{\w m}])$
\end{enumerate}
\end{defin}
\begin{lem}\label{WhiteheadLowerCentral}
Whitehead products sends weighted lower ($p$-)central series to the usual one
\begin{equation*}
W_m(\g^w_n(F[X^{\w m}]))\subset \g_n, \ W_m(\gamma^{[p], w}_n (F[X^{\w m}]))\subset \gp_n
\end{equation*}
\end{lem}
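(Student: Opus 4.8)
The plan is to reduce the statement to a purely group-theoretic inclusion, proved in a single simplicial degree. Since $\g_n$, $\gp_n$ and their weighted variants are defined levelwise as simplicial subgroups and $W_m$ is a levelwise homomorphism, I would fix a dimension $k$ and work with the ordinary groups $G\defeq F[X_k^{\w m}]$ and $H\defeq F[X_k]$. The key observation is that in every case the weighted filtration can be written uniformly as $\g^w_n(G)=\g_{\lceil n/m\rceil}(G)$ and $\gamma^{[p],w}_n(G)=\gp_{\lceil n/m\rceil}(G)$; for $n<m$ this reads $\g_1(G)=G$, exactly as in part (i) of the definition.

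Granting the single \emph{weight-multiplying} inclusion
\begin{equation*}
W_m(\g_q(G))\subset\g_{qm}(H),\qquad W_m(\gp_q(G))\subset\gp_{qm}(H)\qquad(q\ge 1),
\end{equation*}
the lemma follows immediately: for arbitrary $n$ one has
\begin{equation*}
W_m(\g^w_n(G))=W_m(\g_{\lceil n/m\rceil}(G))\subset\g_{m\lceil n/m\rceil}(H)\subset\g_n(H),
\end{equation*}
where the last inclusion uses $m\lceil n/m\rceil\ge n$ together with the fact that $\g_\bullet$ is decreasing, and the $p$-restricted case is word-for-word the same. This part is routine bookkeeping with the ceiling function.

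For the weight-multiplying inclusion I would argue on generators, exploiting that $W_m$ is a homomorphism. Each free generator $x_1\w\dots\w x_m$ of $G$ maps to the left-normed commutator $[x_1,\dots,x_m]$, so $W_m(G)\subset\g_m(H)\subset\gp_m(H)$. Now $\g_q(G)$ is generated by $q$-fold commutators $[\xi_1,\dots,\xi_q]$ with $\xi_l\in G$, and $W_m$ carries these to $[W_m\xi_1,\dots,W_m\xi_q]$ with each entry in $\g_m(H)$; iterating the standard inclusion $[\g_a,\g_b]\subset\g_{a+b}$ places the result in $\g_{qm}(H)$, which is a subgroup, so all of $W_m(\g_q(G))$ lands there. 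In the $p$-restricted case $\gp_q(G)$ is generated by elements $[\xi_1,\dots,\xi_i]^{p^j}$ with $ip^j\ge q$; their images $[W_m\xi_1,\dots,W_m\xi_i]^{p^j}$ have commutator part in $\g_{im}(H)\subset\gp_{im}(H)$, and then the two structural properties of the lower $p$-central (Jennings--Zassenhaus) series,
\begin{equation*}
[\gp_a,\gp_b]\subset\gp_{a+b},\qquad (\gp_a)^p\subset\gp_{pa},
\end{equation*}
give $[W_m\xi_1,\dots,W_m\xi_i]^{p^j}\in\gp_{imp^j}(H)\subset\gp_{qm}(H)$, using $imp^j=(ip^j)m\ge qm$.

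The genuinely structural input, and the step I would be most careful about, is the $p$-power inclusion $(\gp_a)^p\subset\gp_{pa}$: raising to the $p$-th power must multiply the weight by $p$. This is precisely the feature distinguishing the lower $p$-central series from the ordinary one, and it is what forces the target filtration to be $\gp_\bullet$ (rather than $\g_\bullet$) whenever the generating element of $\gp_q(G)$ carries a nontrivial power $p^j$. Rather than reprove it, I would invoke the standard theory of $N$-series / restricted Lie filtrations, in which $[\gp_a,\gp_b]\subset\gp_{a+b}$ and $(\gp_a)^p\subset\gp_{pa}$ are the defining axioms and the description of $\gp_n$ by the generators $[g_1\dots g_i]^{p^j}$ with $ip^j\ge n$ is Jennings' theorem.
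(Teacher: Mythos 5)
Your proposal is correct and follows essentially the same route as the paper: both reduce the lemma to the weight-multiplying inclusion $W_m(\g_q)\subset\g_{qm}$ via $[\g_a,\g_b]\subset\g_{a+b}$, and both handle the mod-$p$ case by carrying the $p^j$-exponents through, the only cosmetic difference being that the paper keeps everything in the form $\prod_{ip^j\ge n}(\gamma_i)^{p^j}$ (so that $(\gamma_{im})^{p^j}$ with $imp^j\ge n$ lies in $\gp_n$ directly by definition), whereas you detour through the $N_p$-series axiom $(\gp_a)^p\subset\gp_{pa}$.
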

\begin{proof}
Statement follows from the well known fact that $[\g_n,\g_m]\subset \g_{n+m}$. Mod-$p$ version follows from the integral one:
$$
W_m(\prod_{(i,j):ip^j\geq \lceil\frac{n}{m}\rceil}(\gamma_i)^{p^j})=\prod_{(i,j):ip^j\geq \lceil\frac{n}{m}\rceil}W_m(\gamma_i)^{p^j}\subset\prod_{(i,j):ip^j\geq \lceil\frac{n}{m}\rceil}(\gamma_{im})^{p^j}\subset\prod_{(i,j):i'p^j\geq n}(\gamma_{i'})^{p^j}
$$
where $i'=im$ in the last product
\end{proof}

Since we are working with free (simplicial) groups, it is convenient to switch our attention from lower ($p$-)central series $\gamma_n$ and $\gp_n$ to powers of (mod-$p$) augmentation ideals $\Delta^n$ and $\Delta_p^n$. Recall that \textit{augmentation ideal} and \textit{(mod-$p$) augmentation ideal} are kernels of augmentation homomorphisms
\begin{equation*}
\xym{\Delta\mono[r]&\Z[F[X]]\epi[r]^-{\e}&\Z}, \ \xym{\Delta_p\mono[r]&\Z/p[F[X]]\epi[r]^-{\e}&\Z/p}
\end{equation*}
According to Magnus-Witt theorem $\g_n$ and $\gp_n$ can be expressed through $\Delta^n$ and $\Delta_p^n$
\begin{equation*}
\g_n=F[X]\cap(1+\Delta^n), \ \gp_n=F[X]\cap(1+\Delta^n_p)
\end{equation*}
We will use the following lemma (stated simultaneously for integral and mod-$p$ case):
\begin{lem}\label{lowerCentralAugmentationLemma}
Let $f:A\to B$ be a pointed ($f(1)=1$) map between free groups $A$ and $B$ such that $\Z f(\D^n)\subset\D^m$ ($\Z/p f(\D^n_p)\subset\D^m_p$). Then $f(\gamma_n)\subset \gamma_m$ (resp. $f(\gp_n)\subset \gp_m$) and there is a well defined map $\bar f: A/\gamma_n\to B/\gamma_m$ (resp. $\bar f: A/\gp_n\to B/\gp_m$)
\end{lem}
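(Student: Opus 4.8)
The plan is to deduce everything from the Magnus--Witt identification $\g_n = A\cap(1+\D^n)$ recalled just above, which is exactly what converts the group-theoretic statement into one about powers of the augmentation ideal, where the linear extension $\Z f\colon \Z[A]\to\Z[B]$ is the natural object to work with. The whole proof is then almost formal; the only real idea is to pass to the group rings.

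First I would prove the inclusion $f(\g_n)\subset\g_m$. Take $g\in\g_n$; by Magnus--Witt this means $g\in A$ and $g-1\in\D^n$. Since $f$ is pointed, $\Z f$ is a $\Z$-linear map with $\Z f(1)=1$, so $\Z f(g-1)=f(g)-1$. The hypothesis $\Z f(\D^n)\subset\D^m$ then gives $f(g)-1\in\D^m$, and as $f(g)\in B$ we conclude $f(g)\in B\cap(1+\D^m)=\g_m$. Note that only $\Z$-linearity of $\Z f$ is used here, never that $f$ is a homomorphism.

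For the induced map I would again pass to the rings. The set map $A\to\Z[A]/\D^n$, $g\mapsto \bar g$, factors through an injection $A/\g_n \hookrightarrow \Z[A]/\D^n$ of pointed sets: writing $g_1-g_2=g_2(g_2^{-1}g_1-1)$ and using that $g_2$ is a unit and $\D^n$ a two-sided ideal, one sees $g_1\equiv g_2 \pmod{\D^n}$ holds iff $g_2^{-1}g_1-1\in\D^n$, i.e.\ iff $g_2^{-1}g_1\in\g_n$, i.e.\ iff $g_1\g_n=g_2\g_n$; the same holds for $B$. On the other hand $\Z f(\D^n)\subset\D^m$ says exactly that $\Z f$ descends to a (linear) map $\Z[A]/\D^n\to\Z[B]/\D^m$ sending $\bar g\mapsto \overline{f(g)}$ on group elements. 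Chaining these, $g_1\g_n=g_2\g_n$ gives $\bar g_1=\bar g_2$ in $\Z[A]/\D^n$, hence $\overline{f(g_1)}=\overline{f(g_2)}$ in $\Z[B]/\D^m$, and injectivity for $B$ forces $f(g_1)\g_m=f(g_2)\g_m$. Therefore $\bar f(g\g_n)\defeq f(g)\g_m$ is well defined (and pointed, since $f(1)=1$).

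The step I expect to be the only genuinely non-formal input --- the ``obstacle'' --- is the injectivity $A/\g_n\hookrightarrow\Z[A]/\D^n$, which is precisely where the two-sidedness of the augmentation ideal together with the Magnus--Witt identity are used; everything else is bookkeeping with the linear map $\Z f$. The mod-$p$ statement is proved verbatim, replacing $\Z$, $\D$, $\g$ by $\Z/p$, $\D_p$, $\gp$ throughout and invoking the mod-$p$ Magnus--Witt identity $\gp_n=A\cap(1+\D_p^n)$.
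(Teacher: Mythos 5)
Your proof is correct and follows essentially the same route as the paper's: both reduce everything to the Magnus--Witt identification $\g_n=A\cap(1+\D^n)$, use only $\Z$-linearity and pointedness of $\Z f$ for the inclusion $f(\g_n)\subset\g_m$, and use two-sidedness of $\D^m$ plus invertibility of group elements for well-definedness. Your packaging of the last step as the injection $A/\g_n\hookrightarrow\Z[A]/\D^n$ is just a reformulation of the paper's direct computation $f(gx)-f(g)\in\D^m\Rightarrow f(g)^{-1}f(gx)-1\in\D^m\Rightarrow f(g)^{-1}f(gx)\in\g_m$.
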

\begin{proof}
We will prove only integral case. Using identification $\gamma_n=A\cap(1+\D^n)$,  $\forall g\in\gamma_n$ $g=1+a, \, a\in \D^n$. So $f(g)=\Z f(1+a)=1+\Z f(a)\in (1+\D^m)\cap B=\gamma_m$. For the second statement it is sufficient to show that $\forall g\in A, x\in\gamma_n \ f(gx)\in f(g)\gamma_m$. Using the same identification $x=1+b, \, b\in\D^n\ra f(gx)=f(g)+\Z f(gb)$, so $f(gx)-f(g)\in \D^m$, $f(g)^{-1}f(gx)-1\in \D^m$, and finally $f(g)^{-1}f(gx)\in \gamma_m$
\end{proof}

The standard machinery for working with powers of augmentation ideal is a Fox free differential calculus, first described in \cite{FoxFDCI}. Here we will briefly sketch the definition of Fox derivatives and state the theorem of Fox, connecting these derivatives and powers of augmentation ideal.
\begin{defin}
Let $FX$ be a free group on the set of generators $X$, $x_i\in X$ \textit{Fox derivative with respect to $x_i$} is a linear map
\begin{equation*}
\partial_i:\Z [FX]\to\Z [FX]
\end{equation*} 
uniquely determined by the following properties
\begin{align*}
&\partial_i(x_j)=\begin{cases} 1, & \mbox{if } i=j \\ 0, & \mbox{if } i\neq j \end{cases}\\
&\partial_i(ab)=\partial(a)\e(b)+a\partial(b), \ a,b\in \Z [FX]
\end{align*}
\end{defin}
For a sequence of generators $x_{i_1}\dots x_{i_k}$, $\partial^k_{i_1\dots i_k}$ will denote higher Fox derivative $\partial_{i_1}\circ\dots\circ \partial_{i_k}$ of order $k$. Fox derivative of order $0$ is defined to be an augmentation homomorphism $\e$.
\begin{thm}[Fox, \cite{FoxFDCI}]\label{FoxTheorem}
Let $a\in \Z [FX]$, then $a\in \Delta^n$ if and only if for all $0\leq k<n$ and for all sequences of indices $(i_1\dots i_k)$, $\e\partial^k_{i_1\dots i_k}(a)=0$, i.e. element of a group ring lies in the $n$-th power of augmentation ideal iff augmentations of all its Fox derivatives up to order $n-1$ vanishes.
\end{thm}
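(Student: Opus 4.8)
The plan is to induct on $n$, with the engine being the \emph{fundamental formula} of Fox calculus together with the fact that for a \emph{free} group $FX$ the augmentation ideal is a free left $\Z[FX]$-module. First I would establish, for every $a\in\Z[FX]$, the identity
\begin{equation*}
a-\e(a)=\sum_i \partial_i(a)(x_i-1).
\end{equation*}
This I would check on a single group element $g\in FX$ by induction on word length, using $\partial_i(gh)=\partial_i(g)+g\,\partial_i(h)$ for $g,h\in FX$ (a special case of the derivation rule, since $\e(h)=1$) and the base values $\partial_i(x_j)=\delta_{ij}$, $\partial_i(x_j^{-1})=-\delta_{ij}x_j^{-1}$; extending $\Z$-linearly then yields the formula for arbitrary $a$, with the term $\e(a)$ absorbing the failure of $a$ to be a single group element.

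Next I would record that $\Delta$ is the free left $\Z[FX]$-module on $\{x_i-1\}$. The fundamental formula gives surjectivity of $(c_i)\mapsto\sum_i c_i(x_i-1)$ onto $\Delta$; freeness follows by applying $\partial_j$ to $\sum_i c_i(x_i-1)$ and reading off $\partial_j(\sum_i c_i(x_i-1))=c_j$ from the derivation rule, so the coordinates are uniquely recovered and coincide with the Fox derivatives. The crux of the induction is then the reduction
\begin{equation*}
a\in\Delta^{n+1}\ \Longleftrightarrow\ \e(a)=0 \ \text{ and } \ \partial_i(a)\in\Delta^n \ \text{ for all } i.
\end{equation*}
To obtain it I write $\Delta^{n+1}=\Delta^n\cdot\Delta=\sum_i \Delta^n(x_i-1)$, using that $\Delta^n$ is two-sided so $\Delta^n\cdot\Z[FX]=\Delta^n$; freeness of $\Delta$ upgrades this sum to a direct sum $\Delta^{n+1}=\bigoplus_i \Delta^n(x_i-1)$, whence for $a\in\Delta$ the unique coordinates $\partial_i(a)$ all lie in $\Delta^n$ exactly when $a\in\Delta^{n+1}$.

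With the reduction in hand the induction closes quickly. The base case $n=1$ is $a\in\Delta\Leftrightarrow\e(a)=0$, which is precisely the $k=0$ condition since $\partial^0=\e$. For the inductive step, $a\in\Delta^{n+1}$ iff $\e(a)=0$ and each $\partial_i(a)\in\Delta^n$; applying the inductive hypothesis to $\partial_i(a)$ converts the latter into the vanishing of $\e\partial^k_{j_1\dots j_k}\partial_i(a)$ for all $0\le k<n$ and all index sequences, and the re-indexing $\partial^k_{j_1\dots j_k}\partial_i=\partial^{k+1}_{j_1\dots j_k i}$ repackages everything as the vanishing of $\e\partial^m_{l_1\dots l_m}(a)$ for all $0\le m<n+1$. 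I expect the main obstacle to lie in the middle steps: proving genuine freeness of $\Delta$ (this is exactly where freeness of the group $FX$ enters and where the statement would break for a general group) and handling the left/right asymmetry when expanding $\Delta^{n+1}=\sum_i\Delta^n(x_i-1)$, so that the direct-sum decomposition, and hence the clean identification of coordinates with Fox derivatives, is legitimate. The mod-$p$ version used elsewhere follows verbatim after replacing $\Z$ by $\Z/p$ throughout.
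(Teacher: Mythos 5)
Your proof is correct, and it takes a genuinely different route from the paper. The paper does not prove this statement at all: it is quoted as Fox's theorem, and the only justification offered is the remark immediately following it, namely that the result is a corollary of formula (\ref{MagnusFox}) (which identifies the coefficients of the Magnus expansion $\mu(w)$ with augmented Fox derivatives) together with the assertion that the Magnus embedding, extended linearly to $\Z[FX]$, carries $\Delta^n$ exactly onto the power series of length $\geq n$. That route is economical in context because formula (\ref{MagnusFox}) is already needed for Theorem (\ref{JHandMagnus}), and it packages all derivatives of $a$ simultaneously as the coefficients of a single series; but the hard direction of the equivalence ($\mu(a)$ of length $\geq n$ implies $a\in\Delta^n$) is precisely what is buried in the unproved claim about the image of $\Delta^n$. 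Your argument, by contrast, is self-contained: the fundamental formula $a-\e(a)=\sum_i\partial_i(a)(x_i-1)$ plus freeness of $\Delta$ as a left $\Z[FX]$-module on $\{x_i-1\}$ yields the reduction $a\in\Delta^{n+1}\Leftrightarrow\e(a)=0$ and $\partial_i(a)\in\Delta^n$ for all $i$, and the induction closes; your re-indexing $\partial^k_{j_1\dots j_k}\partial_i=\partial^{k+1}_{j_1\dots j_k i}$ is consistent with the paper's convention $\partial^k_{i_1\dots i_k}=\partial_{i_1}\circ\dots\circ\partial_{i_k}$. This has the additional merit of isolating exactly where freeness of $FX$ is used (freeness of the augmentation ideal, which fails for general groups), and, as you note, it transports verbatim to $\Z/p$, which is what Corollary (\ref{FoxTheoremModP}) needs.
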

\begin{rem}
This theorem can be seen as a corollary of formula (\ref{MagnusFox}) above, since image of $\Delta^n$ under Magnus embedding (extended linearly to a group ring) is $P_n\subset\Z\dlg X\drg$ - formal power series of length $\geq n$.
\end{rem}
\begin{cor}\label{FoxTheoremModP}
For $a\in\Z/p[FX]$, $a\in \Delta^n_p$ iff $\forall \, k, \ \forall \, (i_1\dots i_k)$
$$
\e\partial^k_{i_1\dots i_k}(a)\equiv 0 \mod p
$$
\end{cor}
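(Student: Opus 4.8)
The plan is to deduce the mod-$p$ statement from the integral Fox theorem (\ref{FoxTheorem}) by exploiting the compatibility of all the operators involved with the reduction homomorphism $r:\Z[FX]\to\Z/p[FX]$. First I would record that $r$ is a ring map commuting with the augmentation $\e$ and with each Fox derivative $\partial_i$: both are pinned down by defining equations (values on generators, the Leibniz rule, $\e(x_j)=1$) that are stable under reduction mod $p$, so $r$ commutes with every higher derivative $\partial^k_{i_1\dots i_k}$, and for any integral lift $\tilde a$ of $a$ one has $\e\partial^k_{i_1\dots i_k}(a)\equiv\e\partial^k_{i_1\dots i_k}(\tilde a)\bmod p$. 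Since $r$ is surjective and $r(\D)=\D_p$, it follows that $r(\D^n)=\D^n_p$.

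With this in place the forward implication is immediate: if $a\in\D^n_p$, write $a=r(b)$ with $b\in\D^n$; Theorem (\ref{FoxTheorem}) gives $\e\partial^k_{i_1\dots i_k}(b)=0$ for all $0\le k<n$, and applying $r$ yields $\e\partial^k_{i_1\dots i_k}(a)\equiv 0\bmod p$, as required.

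The converse is the main obstacle, and I would handle it by induction on $n$ rather than by lifting, precisely because the hypothesis controls the Fox derivatives only modulo $p$: an integral lift $\tilde a$ need not lie in $\D^n$ but only in $\D^n+p\Z[FX]$, so integral Fox does not apply verbatim. The base case $n=1$ is just $\e(a)=0$ in $\Z/p$, i.e. $a\in\D_p$. For the inductive step, the hypothesis for $k<n$ includes $k<n-1$, so by induction $a\in\D^{n-1}_p$, and it remains to show that the class of $a$ in the quotient $\D^{n-1}_p/\D^n_p$ vanishes. Here I would invoke the identification of the associated graded of the $\D_p$-adic filtration on $\Z/p[FX]$ with the tensor algebra: the component $\D^{n-1}_p/\D^n_p$ is carried isomorphically onto $\bar\Z/p[X]^{\ot (n-1)}$, the class of $a$ going to the length-$(n-1)$ part of its mod-$p$ Magnus image, whose coefficients are exactly the $\e\partial^{n-1}_{i_1\dots i_{n-1}}(a)$ by formula (\ref{MagnusFox}) read over $\Z/p$. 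These all vanish by hypothesis, so $a\in\D^n_p$.

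Alternatively, and in the spirit of the Remark following Theorem (\ref{FoxTheorem}), one may run the Magnus argument directly over $\Z/p$: the mod-$p$ Magnus map $\mu\colon\Z/p[FX]\to\Z/p\dlg X\drg$ satisfies $\mu(\D^n_p)=\rho'(\mu(\D^n))$, the mod-$p$ power series of length $\ge n$, and (\ref{MagnusFox}) identifies the condition ``$\e\partial^k_{i_1\dots i_k}(a)\equiv 0$ for all $k<n$'' with $\mu(a)$ having length $\ge n$. The content one must still supply is the injectivity-type statement $\mu^{-1}(\text{length}\ge n)=\D^n_p$, which is again the graded computation above; this, rather than any manipulation of Fox derivatives, is where the real work sits.
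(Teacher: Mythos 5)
Your proposal is correct and follows essentially the same route as the paper's own proof: the mod-$p$ Magnus embedding $\mu_p$, whose coefficients are augmentations of Fox derivatives reduced mod $p$, combined with the identification of $\D^n_p$ with the power series of length $\geq n$. You are in fact more careful than the paper's very terse argument, which just asserts these two facts; your induction via the associated graded $\D^{n-1}_p/\D^n_p\cong\bar\Z/p[X]^{\ot (n-1)}$ supplies exactly the converse inclusion $\mu_p^{-1}(P_n)=\D^n_p$ that the paper leaves implicit.
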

\begin{proof}
Corollary follows from the mod-$p$ version of Magnus embedding
\begin{equation*}
\mu_p:FX\to\Z/p\dlg X\drg, \ x\mapsto x+1
\end{equation*}
which is the composition of $\mu$ and the quotient map $\Z\dlg X\drg\to\Z/p\dlg X\drg$. Therefore, coefficients of elements in the image of $\mu_p$ are given by augmentations of Fox derivatives modulo prime $p$. And as before, after extending $\mu_p$ to the group ring $\Z/p [FX]$, powers of augmentation ideals $\Delta_p^n$ will maps to a elements of length $\leq p$. These two facts give the result.
\end{proof}

Theorem (\ref{FoxTheorem}) together with lemma (\ref{lowerCentralAugmentationLemma}) provides a convenient way to check whenever $H_k$ preserves weighted lower ($p$-)central series filtration.
\begin{thm}\label{JHLowerCentral}
Simplicial James-Hopf map $H_m: F[X]\to F[X^{\w m}]$ sends lower central series to a weighted one:
$$
H_m(\g_n)\subset \g^w_n, \ H_m(\gp_n)\subset \g^{[p],w}_n
$$
\end{thm}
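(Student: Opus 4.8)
The plan is to reduce the statement, via the Magnus embedding and Fox calculus, to a combinatorial estimate on the length of a power series built functorially from $H_m(w)$. Following the pattern of Lemma (\ref{lowerCentralAugmentationLemma}), I would first transfer the problem from lower central series to augmentation ideals. By the Magnus--Witt identification $\g_n=F[X]\cap(1+\Delta^n)$, used in both source and target, together with Fox's theorem (\ref{FoxTheorem}), it is enough to show for each $w\in\g_n$ that $H_m(w)-1\in\Delta^{\lceil n/m\rceil}(F[X^{\w m}])$, i.e. that every augmented Fox derivative $\e\partial^k_{J_1\dots J_k}(H_m(w))$ taken with respect to the $m$-fold smash generators $J_l$ of $F[X^{\w m}]$ vanishes for $1\le k\le\lceil n/m\rceil-1$ (the same augmentation-ideal analysis is what feeds Lemma (\ref{lowerCentralAugmentationLemma}) to produce the induced maps of towers). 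The governing arithmetic is that $k\le\lceil n/m\rceil-1$ forces $km\le n-1<n$, so only orders strictly below $n$ ever occur.

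To compute these Fox derivatives I would pass through the two Magnus embeddings. Let $\mu^{(m)}$ denote the Magnus embedding of $F[X^{\w m}]$ and let $\Phi\colon\Z\dlg X^{\w m}\drg\to\Z\dlg X\drg$ be the block-concatenation ring homomorphism $x_{i_1}\w\dots\w x_{i_m}\mapsto x_{i_1}\dots x_{i_m}$; it is injective and multiplies length by $m$. Since $\mu^{(m)}$ and $\Phi$ are homomorphisms, the definition (\ref{newJH}) gives at once
\begin{equation*}
\Phi\mu^{(m)}(H_m(w))=\prod_{I}(1+B_I)^{\e_I},\qquad B_I=x_{i_1}\dots x_{i_m},\quad \e_I=\e_{i_1}\dots\e_{i_m},
\end{equation*}
the product running in lexicographic order over the length-$m$ index sequences $I=(i_1\dots i_m)$ admitted in (\ref{newJH}). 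As the right-hand side is supported in degrees divisible by $m$ and $\Phi$ is injective, the required vanishing for all $k<\lceil n/m\rceil$ is equivalent to the single condition $\Phi\mu^{(m)}(H_m(w))\in P_n$ (series of length $\ge n$). The degree-$m$ part of this series equals $\sum_{(i_1\dots i_m)}\e\partial^m_{i_1\dots i_m}(w)\,x_{i_1}\dots x_{i_m}=\pi_m(\mu(w))$ by Theorem (\ref{JHandMagnus}), so the leading case already follows from $w\in\g_n$ with $n>m$.

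The crux is the analogous control in every higher degree, which I would isolate as a Key Lemma: for each $j\ge1$ the coefficient of a degree-$jm$ monomial in $\prod_I(1+B_I)^{\e_I}$ is a polynomial, with no constant term (it vanishes when $w=1$), in the augmented Fox derivatives $\e\partial^d_{i_1\dots i_d}(w)$ of $w$ of orders $1\le d\le jm$. Granting this, when $w\in\g_n$ and $jm<n$ each occurring Fox derivative has order $\le jm<n$ and hence vanishes, so every coefficient in degree $jm<n$ vanishes; since the series is supported in $m$-divisible degrees, this yields $\Phi\mu^{(m)}(H_m(w))\in P_n$ and the theorem. The main obstacle is precisely this Key Lemma: one must show that expanding the ordered, noncommutative product $\prod_I(1+B_I)^{\e_I}$ and grouping blocks by value reorganises the signed length-$m$ subsequence data of $w$ — with the sign and repetition conventions of the rules (\ref{secondRule})--(\ref{thirdRule}) — into Fox derivatives of $w$ of controlled order, the point being that the relative lexicographic order of $j$ blocks, which governs their arrangement, is itself encoded by derivatives of order at most $jm$. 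This bookkeeping of the collection process is the genuinely combinatorial heart of the argument.

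Finally, the mod-$p$ statement $H_m(\gp_n)\subset\g^{[p],w}_n$ goes through verbatim with Corollary (\ref{FoxTheoremModP}) and the mod-$p$ Magnus embedding $\mu_p$ in place of their integral versions: the polynomial identity of the Key Lemma reduces modulo $p$, and the augmentation ideals $\Delta^{\bullet}_p$ detect $\gp_{\bullet}$ exactly as in the integral case.
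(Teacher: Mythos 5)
Your reduction to augmentation ideals via Magnus--Witt and the identity $\Phi\mu^{(m)}(H_m(w))=\prod_I(1+B_I)^{\e_I}$ are both correct, and this is a genuinely different route from the paper's. But the argument has a real gap: everything rests on the unproved ``Key Lemma'' that the degree-$jm$ coefficients of $\prod_I(1+B_I)^{\e_I}$ are polynomials without constant term in the augmented Fox derivatives of $w$ of orders at most $jm$, and this is not routine bookkeeping. Expanding the ordered product, the degree-$jm$ part is a sum over $j$-tuples of admissible $m$-index sequences $I_1,\dots,I_j$ taken in the product order of Definition (\ref{newJH}) (with binomial multiplicities when blocks repeat), and the ordering condition between consecutive blocks is \emph{not} the interleaving condition $\lambda_j\leq\lambda_{j+1}-\frac{\e_{\lambda_{j+1}}+1}{2}$ that defines an admissible $jm$-sequence: adjacent blocks in lexicographic order need not satisfy the interleaving inequality across the block boundary (the last index of $I_k$ may exceed the first index of $I_{k+1}$). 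So the degree-$jm$ coefficient is not $\e\partial^{jm}_{\cdot}(w)$ plus visibly lower-order corrections; one needs a genuine inclusion--exclusion (quasi-shuffle) identity reorganizing the ordered tuples into products of interleaved derivatives, together with an argument that no derivative of order $>jm$ intervenes. You have not exhibited such an identity, and its validity is not self-evident; this is exactly where all the combinatorial content of the theorem lives, and it is at least as hard as what the paper actually does.

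The paper sidesteps your Key Lemma by proving the stronger, linearized statement $\Z H_m(\Delta^n)\subset\Delta^{\lceil n/m\rceil}$: it spans $\Delta^n$ by products $(g_1-1)\dots(g_n-1)$, uses naturality of $H_m$ to reduce to the case where the $g_i$ are \emph{distinct} generators $y_i^{\e_i}$ (pulling back along a map of generating sets $Y\to X$), and then the count of subwords $v$ containing a fixed tuple of multi-indices collapses to a single binomial coefficient, so the alternating sum $\sum_j(-1)^{m-j}\binom{m-N}{j-N}$ does all the work. Distinctness of the letters is what makes that count tractable, and it is unavailable in your element-wise setting, where $w\in\g_n$ is an arbitrary word with repeated letters. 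Note also that even granting your Key Lemma, you would obtain only the containment $H_m(\g_n)\subset\g^w_n$, not the hypothesis $\Z H_m(\Delta^n)\subset\Delta^{\lceil n/m\rceil}$ of Lemma (\ref{lowerCentralAugmentationLemma}) (the ideal $\Delta^n$ is not spanned by $\{w-1:w\in\g_n\}$), which is what produces the induced maps on the quotients $F[X]/\gp_n$ used in Theorem (\ref{actionOnSS}). That suffices for the literal statement, but your proof would deliver strictly less than the paper's.
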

\begin{proof}
By lemma (\ref{lowerCentralAugmentationLemma}) it is sufficient to prove preservation of weighted augmentation ideal filtration, i.e.  that $\forall g_1,\dots g_m\in F[X]$ $H_n((g_1-1)\dots (g_m-1))\in \D^m_w$. After writing each $g_i$ in normal form, the statement can be reduced to the case when all $g_i$ are generators of $F[X]$, possibly with negative exponent. Since $H_n$ is natural with respect to maps between sets of generators $Y\to X$, all this generators can be taken distinct from each other and ordered lexicographically.
\begin{equation*}
\xym{
\Z F[Y]\ar[r]^{\Z H_n}\ar[d]_f&\Z F[Y^{\w n}]\ar[d]^{f\w\dots \w f}\\
\Z F[X]\ar[r]_{\Z H_n}&\Z F[X^{\w n}]
}
\end{equation*}
So 
\begin{equation*}
\Z H_n((x_{i_1}^{\e_{i_1}}-1)\dots (x_{i_m}^{\e_{i_m}}-1))=(f\w\dots\w f)\Z H_n((y_1^{\e_1}-1)\dots(y_m^{\e_m}-1))
\end{equation*}
According to the theorem (\ref{FoxTheorem}) it is sufficient to show that augmentations of all derivatives $\partial^i_{J_i,\dots, J_1}$ of $\Z H_n((y_1^{\e_1}-1)\dots(y_m^{\e_m}-1))$ of orders up to $q$ (or $q-1$) are vanished for all possible combinations of multi-indices $J_k=y_{k_1}\w\dots\w y_{k_n}$.

Let $Y=y_1^{\e_1}\dots y_m^{\e_m}$, $|v|$ is the length of word $v$ and notations $s\subset v, \ s\subseteq v$ will stands for $s$ being a subword of word $v$ and $s$ being a subword (with possible repetitions) of word $v$ respectively. With these notations
\begin{equation}\label{der}
\varepsilon\partial^i_{J_i,\dots, J_1}\Z H_n((y_1^{\e_1}-1)\dots(y_m^{\e_m}-1))=\sum_{j=0}^{m}(-1)^{m-j}\sum_{v\subset Y, \ |v|=j}\varepsilon(\partial^i_{J_i,\dots , J_1}H_n(v))
\end{equation}
According to formula (3.5) of \cite{FoxFDCI}, in which higher derivatives of arbitrary reduced word in free group are computed, and using formula for $H_n(v)=\prod_{d\subseteq v}d^{\e_d}$ we see that

\begin{equation}\label{test}
\e\partial^i_{J_i,\dots, J_1}(H_n(v))=\sum \e_{d_1}\dots\e_{d_i}
\end{equation}

with sum over all sequences of subwords (with possible repetitions) $d_k$ of $v$, such that for all subwords $d_k$ in sequence $J_k=d_k$ and subword $d_k$ can repeat if the corresponding exponent $\e_{d_k}$ is negative.

This means that augmentation of derivative is not zero if corresponding $v$ contains all subwords $J_k$ (again, repetitions are allowed if corresponding exponents are negative). First of all, since everything in $v$ is ordered lexicographically, all derivatives with respect to tuples $J_q\dots J_1$, which are not in (anti)lexicographic order, are zero. Second, since all elements in $Y$ are different, all subwords $d\subseteq v$ are different from each other, so all $J_k$ appears in $H_n(v)$ only once, with fixed exponent $\e_{J_k}$ and (\ref{der}) will take form 
\begin{equation*}
\varepsilon\partial^i_{J_i,\dots, J_1}\Z H_n((y_1^{\e_1}-1)\dots(y_m^{\e_m}-1))=\e_{J_i}\dots\e_{J_1}\sum_{j=0}^{m}(-1)^{m-j}\sum_{v\subset Y, \ |v|=j, \ J_i\dots J_1\subseteq v}1
\end{equation*}
Next we count the number of subwords $v\subset Y$, of length $j$ which contains $J_i\dots J_1\subseteq v$, for fixed tuple $J_i\dots J_1$. If generator $y_l$ have a positive exponent in $Y$ then derivative with respect to a tuple, which contains multi-index $J_p=y_{p_1}\w\dots\w y_l\w y_l\w\dots\w y_{p_n}$ with repetition of $y_l$ will be zero. So we can assume that all $J_k$ have repetition of indexes inside only if the exponent of corresponding generator is negative in $Y$. Same goes for the case when $J_k=J_{k+1}$ - such derivatives a zero if the product of exponents of generators in $Y$ which also form $J_k$ is positive (does not depend on $v$). With this limitations, the number of $v$ of length $j$ such that $J_i\dots J_1\subseteq v$ is precisely
\begin{equation*}
\binom{m-n_1-\dots-n_r}{j-n_1-\dots-n_r}
\end{equation*}
Here $r$ is a total number of different $J_k$ ($1\leq r\leq i$) and $n_k$ is a total number of different generators in $J_k$ ($1\leq n_k\leq n$). Denoting $N=n_1+\dots n_r$ and substituting this number in the formula we have
$$
\varepsilon\partial^i_{J_i,\dots, J_1}\Z H_n((y_1^{\e_1}-1)\dots(y_m^{\e_m}-1))=\e_{J_i}\dots\e_{J_1}\sum_{j=N}^{m}(-1)^{m-j}\binom{m-N}{j-N}=
$$
$$
=\begin{cases} 
      0 & m-N\neq 0\\
      \e_{J_i}\dots\e_{J_1} & m-N=0
   \end{cases}
$$
since $N\leq i n$ , $s<n$ and $i\leq q$, last case can only happen if $s=0, N=qn$ which is exactly case 1 in the definition of weighted filtration.
\end{proof}
\begin{rem}
Passi (see \cite{PassiPaper}, \cite{PassiBook}) defined a map $f$ between group $G$ and $R$-module $M$ to be \textit{polynomial of degree $\leq n$} if its extension on a group algebra $R[G]$ vanishes on $\Delta^{n+1}_R$. Therefore theorem (\ref{JHLowerCentral}) states that maps $\varepsilon \partial^j_{J_j\dots J_1}\Z H_k:\Z F[X]\to \Z$ are $\Z$-polynomial of degree $\leq jk$.
\end{rem}
\begin{cor}
James-Hopf invariants $H_k$ sends lower $p$-central series to a weighted ones:
\begin{equation*}
H_k(\gp_n)\subset \gamma_n^{[p],w}
\end{equation*}
\end{cor}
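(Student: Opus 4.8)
The plan is to deduce the mod-$p$ containment from the integral statement of Theorem \ref{JHLowerCentral} by reduction modulo $p$, in the same spirit in which Corollary \ref{FoxTheoremModP} was obtained from Theorem \ref{FoxTheorem}. By the definition of the weighted series we have $\gamma^{[p],w}_n(F[X^{\w k}])=\gp_{\lceil n/k\rceil}(F[X^{\w k}])$, and by the Magnus--Witt description this equals $F[X^{\w k}]\cap(1+\D^{\lceil n/k\rceil}_p)$. Hence, by the mod-$p$ version of Lemma \ref{lowerCentralAugmentationLemma}, it is enough to establish the containment of (mod-$p$) augmentation ideals
$$
\Z/p\, H_k(\D^n_p)\subset \D^{\lceil n/k\rceil}_p .
$$

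First I would record the two compatibilities that make the reduction work. The projection $\pi\colon\Z[F[X]]\to\Z/p[F[X]]$ is a surjective ring homomorphism, so $\pi(\D^n)=\pi(\D)^n=\D^n_p$; in particular $\D^n_p$ is spanned by the reductions of the generating products $(g_1-1)\cdots(g_n-1)$ of $\D^n$. Moreover $\Z H_k$ and $\Z/p\, H_k$ are the $\Z$- and $\Z/p$-linear extensions of the single set map $H_k$, so they agree on group elements and therefore satisfy $\Z/p\, H_k\circ\pi=\pi\circ\Z H_k$.

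Granting these, the corollary is immediate: the proof of Theorem \ref{JHLowerCentral} already shows the integral containment $\Z H_k(\D^n)\subset\D^{\lceil n/k\rceil}$, and applying $\pi$ yields
$$
\Z/p\, H_k(\D^n_p)=\Z/p\, H_k\bigl(\pi(\D^n)\bigr)=\pi\bigl(\Z H_k(\D^n)\bigr)\subset\pi\bigl(\D^{\lceil n/k\rceil}\bigr)=\D^{\lceil n/k\rceil}_p .
$$
The only point that seems to require care, and which I regard as the heart of the matter, is that $\gp_n$ carries extra generators $g^{p^j}$ with $ip^j\geq n$ that are absent from $\gamma_n$, so one might fear a genuinely new argument is needed for them; this fear is unfounded precisely because $\D^n_p=\pi(\D^n)$ already absorbs these $p$-th powers (for instance $g^p-1=\pi\bigl((g-1)\sum_{i=1}^{p-1}(g^i-1)\bigr)$ with $(g-1)\sum_{i=1}^{p-1}(g^i-1)\in\D^2$). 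Alternatively, one can avoid lifting altogether and simply rerun the Fox-derivative computation of Theorem \ref{JHLowerCentral} on the same generators: the augmentations $\e\partial^i_{J_i,\dots,J_1}\Z H_k(\cdots)$ appearing there are integers that already vanish in the relevant range, hence vanish modulo $p$, so the mod-$p$ Fox criterion of Corollary \ref{FoxTheoremModP} gives the conclusion in place of Theorem \ref{FoxTheorem}.
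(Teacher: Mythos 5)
Your proof is correct, and it rests on the same underlying idea as the paper's: the mod-$p$ containment is a formal consequence of the integral one under reduction modulo $p$. The packaging differs. The paper reduces to the mod-$p$ Fox criterion (Corollary \ref{FoxTheoremModP}) and then cites Passi's proposition (1.11) --- the composite of a $\Z$-polynomial map with the homomorphism $\Z\to\Z/p$ is a $\Z/p$-polynomial map --- applied to the augmented Fox derivatives $\e\partial^j_{J_j\dots J_1}\Z H_k$, whose integral polynomiality is exactly what the proof of Theorem \ref{JHLowerCentral} establishes. You instead bypass Fox derivatives in the mod-$p$ step altogether: since $\pi\colon\Z[F[X]]\to\Z/p[F[X]]$ is a surjective ring map with $\pi(\D^n)=\D_p^n$, and the linearized James--Hopf maps commute with $\pi$, the integral containment $\Z H_k(\D^n)\subset\D^{\lceil n/k\rceil}$ pushes forward directly, and the mod-$p$ case of Lemma \ref{lowerCentralAugmentationLemma} converts this back to the group-level statement. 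This buys a self-contained argument that proves exactly the special case of Passi's proposition that is needed, at the cost of nothing; your closing alternative (the integer augmentations of the Fox derivatives already vanish, hence vanish mod $p$) is essentially the paper's argument with the citation unwound. Your observation that the extra generators $g^{p^j}$ of $\gp_n$ require no separate treatment is also correctly resolved: they are absorbed by the identification $\gp_n=F[X]\cap(1+\D^n_p)$, which is precisely what the reduction to augmentation ideals presupposes.
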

\begin{proof}
As before we switch our attention to the powers of augmentation ideal $\Delta_p$, by corollary (\ref{FoxTheoremModP}) we need to show that augmentations of corresponding Fox derivatives of $H_k$ vanishes mod $p$. This follows from proposition (1.11) in \cite{PassiBook} which states that composition of $\Z$-polynomial map $f:G\to A$ and homomorphism $\theta: A\to M$ for some $R$-module $M$ is a $R$-polynomial map. Application of this proposition to $\varepsilon \partial^j_{J_j\dots J_1} H_k$ and quotient map $\Z\to\Z/p$ gives the result.
\end{proof}
\begin{thm}\label{actionOnSS}
Elements of Cohen group $H_{\infty}(X)$ induce maps of towers $\Gamma\to \Gamma$ and $\Gamma_p\to \Gamma_p$, and hence equip spectral sequences (\ref{spectralSeq}) with an action of $H_{\infty}: E^r_{p,q}\to E^r_{p,q}$ which preserves differential.
\end{thm}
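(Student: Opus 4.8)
The plan is to reduce the statement to a single generator of $H_\infty$ and then to track the behaviour of the (weighted) lower central filtration along the defining composition (\ref{CohenGroupGenerators}). Since $H_\infty$ is the closure of the subgroup of $\hom_{\sset}(F[-],F^\w[-])$ generated by these compositions, and since the set of natural transformations inducing self-maps of the tower $\Gamma$ (respectively $\Gamma_p$) is stable under composition, under pointwise multiplication, and under passage to the pro-topological limit, it suffices to prove that each generator induces a map $\Gamma\to\Gamma$ and $\Gamma_p\to\Gamma_p$. Stability under pointwise multiplication follows from normality of $\g_n$: if $\phi,\psi$ both carry $\g_n$ into $\g_n$ and descend to $F[X]/\g_n$, then for $g^{-1}g'\in\g_n$ one has $(\phi\cdot\psi)(g)^{-1}(\phi\cdot\psi)(g')=\psi(g)^{-1}\big(\phi(g)^{-1}\phi(g')\big)\psi(g)\cdot\psi(g)^{-1}\psi(g')\in\g_n$, so the pointwise product again descends.

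First I would follow the filtration through the four maps of a single generator
$$F[X]\xrightarrow{H_k}F[X^{\w k}]\xrightarrow{F[\bar\Delta]}F[X^{\w l}]\xrightarrow{F[\sigma]}F[X^{\w l}]\xrightarrow{W_l}F[X].$$
By Theorem \ref{JHLowerCentral} the James-Hopf map carries $\g_n$ into the weighted term $\g^w_n(F[X^{\w k}])=\g_{\lceil n/k\rceil}(F[X^{\w k}])$. The map $F[\bar\Delta]$ is a group homomorphism, hence sends $\g_{\lceil n/k\rceil}(F[X^{\w k}])$ into $\g_{\lceil n/k\rceil}(F[X^{\w l}])$; since $k\le l$ we have $\lceil n/k\rceil\ge\lceil n/l\rceil$, so the image lies in $\g_{\lceil n/l\rceil}(F[X^{\w l}])=\g^w_n(F[X^{\w l}])$. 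The permutation map $F[\sigma]$ is an isomorphism and so preserves every $\g_m$, in particular $\g^w_n(F[X^{\w l}])$. Finally Lemma \ref{WhiteheadLowerCentral} shows that $W_l$ carries $\g^w_n(F[X^{\w l}])$ back into $\g_n(F[X])$. Composing, the generator sends $\g_n$ into $\g_n$; the identical argument with $\gp_n$ and $\gamma^{[p],w}_n$, using the mod-$p$ halves of Theorem \ref{JHLowerCentral} and Lemma \ref{WhiteheadLowerCentral}, handles the $p$-central tower.

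Next I would promote these containments to genuine self-maps of the towers. Each of $F[\bar\Delta]$, $F[\sigma]$, $W_l$ is a homomorphism and therefore descends functorially to the relevant quotients; for the one non-homomorphic factor $H_k$, Lemma \ref{lowerCentralAugmentationLemma} (in its weighted form, which is exactly how Theorem \ref{JHLowerCentral} was obtained) provides a well-defined induced map on quotients from the augmentation-ideal estimate. Assembling, a generator yields a compatible family $F[X]/\g_n\to F[X]/\g_n$—compatibility with the bonding maps $F[X]/\g_{n+1}\to F[X]/\g_n$ is automatic, since at each level the map is ``apply the generator, then reduce mod $\g_n$''—i.e. a self-map of $\Gamma$, and likewise of $\Gamma_p$. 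A morphism of towers of fibrations induces a morphism of the associated exact couples, and hence of each page $E^r$ commuting with $d^r$; running this through the convergent spectral sequences (\ref{spectralSeq}) gives the asserted action $H_\infty\colon E^r_{p,q}\to E^r_{p,q}$ preserving differentials.

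The main obstacle is concentrated entirely in the first step and has in effect already been dealt with: the naive hope that $H_k$ preserves $\g_n$ fails (as the displayed computation of $H_2([x,y])$ shows), and it is only the passage to the weighted filtration, together with the compensating weighted-to-unweighted behaviour of $W_l$, that makes the composite land back in $\g_n$. The two numerical inputs that make the weighted bookkeeping close up are the inequality $\lceil n/k\rceil\ge\lceil n/l\rceil$ for $k\le l$ (controlling $F[\bar\Delta]$) and the containment $[\g_a,\g_b]\subset\g_{a+b}$ underlying Lemma \ref{WhiteheadLowerCentral}. A secondary technical point, which I would treat only briefly, is that elements of the closure $H_\infty$ are limits of finite products of generators; since each finite product induces a tower map and tower maps form a closed subset in the pro-topology, the induced action extends to all of $H_\infty$.
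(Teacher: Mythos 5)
Your proposal is correct and follows essentially the same route as the paper: combine Theorem \ref{JHLowerCentral} and Lemma \ref{WhiteheadLowerCentral} on the generators (\ref{CohenGroupGenerators}), extend to products and to the closure using the group structure on tower self-maps, and pass to exact couples. You actually supply more detail than the paper does — notably the $\lceil n/k\rceil\ge\lceil n/l\rceil$ bookkeeping for $F[\bar\Delta]$ and the normality argument for pointwise products — the only small point the paper makes explicit that you elide is the identification $F[-]/\g_n\cong F^{\w}[-]/\g_n$ needed because the generators land in the pro-nilpotent completion.
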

\begin{proof}
Combining lemma (\ref{WhiteheadLowerCentral}) with theorem (\ref{JHLowerCentral}) we see that generators of Cohen group (and hence all elements) preserves lower ($p$-)central series filtration, and it is immediate that inclusion $F[-]\to F^{\w}[-]$ induce an isomorphisms
\begin{equation}\label{LCSquotientIso}
F[-]/\gamma_n\cong F^{\w}[-]/\gamma_n, \ F[-]/\gp_n\cong F^{\w}[-]/\gp_n, 
\end{equation}
so there are a well defined maps
\begin{align}\label{CohenGroupToTowersMap}
H_{\infty}\to \hom(\Gamma,\Gamma), \ f:F[-]\to F^{\w}[-] \mapsto \{f_n\}_n: \{F[-]/\gamma_n\}_n\to \{F[-]/\gamma_n\}_n\\
H_{\infty}\to \hom(\Gamma_p,\Gamma_p), \ f:F[-]\to F^{\w}[-] \mapsto \{f_n\}_n: \{F[-]/\gp_n\}_n\to \{F[-]/\gp_n\}_n
\end{align} 
Note here that $\hom(\Gamma,\Gamma)$ and $\hom(\Gamma_p,\Gamma_p)$ are groups with respect to pointwise multiplication on each layer of tower, and therefore (\ref{CohenGroupToTowersMap}) are homomorphisms.
here $f_n$ denotes induced map on quotients $F[-]/\gamma_n$ (or $F[-]/\gp_n$). Image of these maps will be called \textit{Cohen groups for towers} $\Gamma$ \textit{and} $\Gamma_p$, denoted by $H\Gamma_{\infty}$ and $H\Gamma_{p,\infty}$

Action of $H_{\infty}$ on spectral sequences comes from the corresponding action on towers through an induced action on exact couples in a standard way. Each $f\in H_{\infty}$ gives a map $E^1_{p,q}\to E^1_{p,q}$ which is an induced map on homotopy groups of fibers. Preservation of differential follows from the fact that $f$ induce a map of towers.
\end{proof}
\section{Functorial decomposition of lower $p$-central series spectral sequence}
Through this section $\k=\Z/p$ will be ground field with $p$ elements, $V$ be a $\k$-module. First we will review few facts about natural coalgebra transformations $T\to T$. Tensor algebra $T(V)=\bigoplus_{n=0}^{\infty} V^{\ot n}$ can be considered as a (connected) Hopf algebra (see \cite{MilnorMoore}) with the standard multiplication $m$ and comultiplication $\psi$ defined as
\begin{equation*}
\psi(v)=v\ot 1 + 1\ot v, \ v\in V
\end{equation*}
on generators (making them primitive) and extended to all $T(V)$. Conjugation $s$ defined as
\begin{equation*}
s(x_1\dots x_k)=(-1)^k x_k\dots x_1
\end{equation*}

Using this comultiplication and conjugation one can define a group structure on set of all coalgebra maps $\hom_{\mathrm{Coalg}_{\k}}(T(V),T(V))$, using so-called \textit{convolution product}: for $f,g:T(V)\to T(V)$ $f*g$ defined as composition
\begin{equation}\label{convolutionProduct}
T(V)\xrightarrow{\psi}T(V)\ot T(V)\xrightarrow{f\ot g}T(V)\ot T(V)\xrightarrow{m} T(V)
\end{equation}
Note that this product is a complete analogue of point-wise multiplication on set of all maps $G\to G$ for group $G$:
\begin{equation*}
G\xrightarrow{\Delta}G\times G \xrightarrow{f\times g} G\times G \xrightarrow{\mu} G
\end{equation*}

$T$ as coalgebra naturally filtered by \textit{James filtration}:
\begin{equation*}
J_k=\bigoplus_{n=0}^k T_n
\end{equation*}
here $T_n$ stands for a functor $T_n(V)=V^{\otimes n}$ and this filtration is complete and gives $\hom_{\mathrm{Coalg}_{\k}}(T(V),T(V))$ structure of a pro-group. 
\begin{lem}[(2.4) in \cite{SelickWuCoalg}]
there is a short exact sequence of groups
\begin{equation}\label{SESHomProGroup}
\xym{
\hom_{\k}(T_n,\L^n)\cong\ker\mono[r]&\hom_{\mathrm{Coalg}_{\k}}(J_n,T)\epi[r]&\hom_{\mathrm{Coalg}_{\k}}(J_{n-1},T)
}
\end{equation}
\end{lem}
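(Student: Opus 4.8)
The plan is to realize the sequence as the kernel--image sequence of the restriction homomorphism $\rho\colon\hom_{\mathrm{Coalg}_{\k}}(J_n,T)\to\hom_{\mathrm{Coalg}_{\k}}(J_{n-1},T)$ induced by the inclusion $J_{n-1}\hookrightarrow J_n$. First I would record that the convolution product (\ref{convolutionProduct}) makes $\hom_{\mathrm{Coalg}_{\k}}(C,T)$ a group for \emph{every} coalgebra $C$: this uses that $T=T(V)$ is cocommutative (it is the universal enveloping algebra of the free Lie algebra on $V$, equivalently $\psi$ is the symmetric shuffle coproduct), so that the convolution $f\ast g=m(f\ot g)\psi$ of two coalgebra maps is again a coalgebra map, with unit $\eta\e$ and inverse $s\circ(-)$. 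Since $\psi(T_k)\subseteq\bigoplus_{i+j=k}T_i\ot T_j$, the subspace $J_{n-1}$ is a sub-coalgebra of $J_n$; hence restriction along $J_{n-1}\hookrightarrow J_n$ intertwines the two convolution structures and $\rho$ is a homomorphism of groups.

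Next I would compute $\ker\rho$. An element $f$ lies in $\ker\rho$ exactly when $f|_{J_{n-1}}=\eta\e$, i.e. $f(1)=1$ and $f(T_k)=0$ for $0<k<n$; thus $f$ is determined by $f|_{T_n}$. Writing $\psi(w)=w\ot 1+1\ot w+\bar\psi(w)$ for $w\in T_n$, with reduced coproduct $\bar\psi(w)\in\bigoplus_{0<i<n}T_i\ot T_{n-i}$, the reduced terms are killed by $f\ot f$, so the coalgebra condition collapses to $\psi(f(w))=f(w)\ot1+1\ot f(w)$; that is, $f(w)$ is primitive. By naturality in $V$ (homogeneity under the scalar action) such a natural transformation lands in the degree-$n$ part, so $f|_{T_n}$ takes values in the degree-$n$ primitives $\mathrm{Prim}_n(T)=\L^n$. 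Conversely, for any $\phi\colon T_n\to\L^n$ the extension-by-zero $f$ with $f(1)=1$, $f|_{T_k}=0$ $(0<k<n)$, $f|_{T_n}=\phi$ satisfies both coalgebra axioms (the counit condition is clear, and the comultiplication identity holds precisely because $\phi(w)$ is primitive and the reduced terms vanish), so $f\mapsto f|_{T_n}$ is a bijection $\ker\rho\xrightarrow{\ \sim\ }\hom_{\k}(T_n,\L^n)$. Finally, computing $(f\ast g)|_{T_n}$ for $f,g\in\ker\rho$, the only surviving contributions are $\phi_f(w)\ot1$ and $1\ot\phi_g(w)$, whose image under $m$ is $\phi_f(w)+\phi_g(w)$; hence the bijection is an isomorphism onto the additive group $\hom_{\k}(T_n,\L^n)$, and in particular $\ker\rho$ is abelian.

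It remains to prove that $\rho$ is surjective, which I expect to be the main obstacle. Given a coalgebra map $g\colon J_{n-1}\to T$, extending it across the single James step $J_{n-1}\hookrightarrow J_n$ amounts to choosing $f|_{T_n}$ with $f|_{J_{n-1}}=g$, and the coalgebra condition on $T_n$ reduces to the lifting problem $\bar\psi_T\big(f(w)\big)=(g\ot g)\bar\psi(w)=:c(w)$. I would first check that $c$ is a coassociativity cocycle, i.e. $(\bar\psi_T\ot 1)c=(1\ot\bar\psi_T)c$; this follows from $g$ being a coalgebra map (so $\bar\psi_T g=(g\ot g)\bar\psi$) together with coassociativity of $\bar\psi$ on $T_n$. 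The remaining point is that such a cocycle lifts through $\bar\psi_T|_{T_n}$.

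Over a field of characteristic zero this is automatic: by the Milnor--Moore theorem $T$ is, as a coalgebra, cofree cocommutative conilpotent on its primitives $\L$, hence injective for conilpotent coalgebra inclusions, and the extension exists with no obstruction. The genuine difficulty is the characteristic-$p$ case of interest here, where $T$ is \emph{not} cofree (already $\F_p[v]$ fails to be cofree cocommutative) and the extension is not formal. I would handle it by producing a \emph{natural} lift directly: decompose $T_n=V^{\ot n}$ under the $\Sigma_n$- and $\mathrm{GL}(V)$-actions, solve the lifting degreewise against the explicit shuffle coproduct (the case $n=2$ already shows $f(v_1v_2)$ may be taken in $T_2$), and use the Lie-power structure of $\mathrm{Prim}(T)$ to absorb the ambiguity by primitives. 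Constructing this natural section of $\bar\psi_T$ on the cocycles that actually arise is the technical heart of the statement; everything else is formal.
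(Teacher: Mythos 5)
The paper does not actually prove this lemma---it is imported verbatim from Selick--Wu \cite{SelickWuCoalg}---so the comparison is with their argument. Your treatment of the convolution group structure, the homomorphism property of restriction, and the computation that the kernel is abelian and consists of the maps $T_n\to\mathrm{Prim}(T)$ is correct and matches the standard argument. But the proposal has a genuine gap exactly where you locate it: surjectivity of $\rho$ is reduced to a lifting problem for $\bar\psi_T$ and then left unsolved, with only a sketch of how one ``would'' build a natural section. That step is the entire content of the lemma in characteristic $p$, and it is not obtained by solving the cocycle problem degreewise; it is obtained by exhibiting explicit, globally defined natural coalgebra self-maps of $T$, namely the composites $\beta_k^T\circ T(\sigma)\circ H_k^{alg}$ built from the algebraic James--Hopf maps of Cohen--Taylor \cite{CohenTaylor}. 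One then argues by induction on $n$, exactly as in the proof of Theorem (\ref{tensorCoalgebraGeneratorsTheorem}) in this paper: assuming every element of $\hom_{\mathrm{Coalg}_{\k}}(J_{n-1},T)$ is a convolution product of restrictions of such global maps, the convolution quotient of a given $f\in\hom_{\mathrm{Coalg}_{\k}}(J_n,T)$ by that product lies in the kernel, hence is a natural map $T_n\to\L^n$, and every such map is realized by $\beta_n^T\circ T(\sigma)\circ H_n^{alg}$ restricted to $J_n$. Consequently every element of $\hom_{\mathrm{Coalg}_{\k}}(J_{n-1},T)$ extends to all of $T$, and a fortiori to $J_n$: surjectivity and the generation statement are proved together, and the existence of $H_k^{alg}$ is the non-formal input your outline is missing. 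Reconstructing it ``degreewise against the shuffle coproduct'' would amount to reproving the Cohen--Taylor result.

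A smaller but genuine slip: in characteristic $p$ the degree-$n$ primitives of $T(V)$ are $\L_{res}^n(V)$, not $\L^n(V)$ (for $V$ one-dimensional and $n=p$ the element $v^p$ is primitive while $\L^p(V)=0$); the paper itself says the image lies in $\L_{res}^k$ in the proof of Theorem (\ref{tensorCoalgebraGeneratorsTheorem}). To land in $\L^n$ as the lemma states, homogeneity under scalars is not enough: one must use that a \emph{natural} linear transformation out of $T_n=(-)^{\otimes n}$ is determined by its multilinear cross-effect, and the Frobenius-twisted summands of $\L_{res}^n$ have trivial multilinear part, so any such transformation factors through $\L^n$. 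As written, your assertion $\mathrm{Prim}_n(T)=\L^n$ is false for $p\mid n$, and the kernel identification needs this extra step.
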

For a simplicial realization theorem below we will use a specific set of topological generators of a pro-group $\hom_{\mathrm{Coalg}_{\k}}(T,T)$, first apparently appeared in \cite{CohenTaylor}:
\begin{thm}\label{tensorCoalgebraGeneratorsTheorem}
Any natural coalgebra self-map $f_V:T(V)\to T(V)$ (evaluated at $V$) can be written down as a (possibly infinite) convolution product $\prod_{k=1}^{\infty} f_k$ of the maps of the form
\begin{equation}\label{tensorCoalgebraGenerators}
T(V)\xrightarrow{H_k^{alg}}T(V^{\otimes k})\xrightarrow{T(\sigma)}T(V^{\otimes k})\xrightarrow{\beta^T_k}T(V)
\end{equation}
here $H_k^{alg}$ is an \textit{algebraic James-Hopf map} (proposition (5.3) in \cite{CohenTaylor}), $\sigma\in\Sigma_k$ and $\beta^T_k$ is a Hopf extension of a map $\beta_k: V^{\otimes k}\to T(V), \ \beta_k(x_1\dots x_k)=[x_1\dots x_k]$.
\end{thm}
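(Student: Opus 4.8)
The plan is to identify $\hom_{\mathrm{Coalg}_{\k}}(T,T)$ with the inverse limit $\varprojlim_n\hom_{\mathrm{Coalg}_{\k}}(J_n,T)$ along the James filtration and to show, by induction on $n$, that the subgroup generated by the maps (\ref{tensorCoalgebraGenerators}) surjects onto each stage $\hom_{\mathrm{Coalg}_{\k}}(J_n,T)$. Since the algebraic James-Hopf map $H_k^{alg}$ annihilates the positive-degree part of $J_{k-1}$, the $k$-th generator does not alter filtration degrees below $k$; hence the finite products assembled stage by stage are coherent, and their limit is a (possibly infinite) convolution product converging in the pro-group topology. Passing to the limit then yields the theorem.

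Two facts read off from the short exact sequence (\ref{SESHomProGroup}) drive the induction. First, I would check that the convolution product (\ref{convolutionProduct}) restricts to \emph{addition} on the kernel $\hom_{\k}(T_n,\L^n)$: if $f,g$ both restrict to $\eta\e$ on $J_{n-1}$, then for $x\in T_n$ one has $\psi(x)=x\ot 1+1\ot x+R$ with $R$ a sum of terms whose two tensor factors both lie in positive degree $<n$; as $f,g$ kill such factors, $(f\ot g)$ annihilates $R$ and $f*g|_{T_n}=f|_{T_n}+g|_{T_n}$. Because $\k=\Z/p$, every $\k$-linear combination in the kernel is therefore an iterated convolution. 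Second, the generator $\beta^T_k\circ T(\sigma)\circ H_k^{alg}$, regarded in $\hom_{\mathrm{Coalg}_{\k}}(J_k,T)$, lies in this kernel and restricts on $T_k$ to $\beta_k\circ\sigma\colon V^{\ot k}\to\L^k$, since $H_k^{alg}$ sends the unique $k$-term subsequence of $(1\dots k)$ to a single generator and $\beta^T_k$ agrees with $\beta_k$ there.

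The arithmetic core is the spanning claim that $\{\beta_k\circ\sigma:\sigma\in\Sigma_k\}$ spans $\hom_{\k}(T_k,\L^k)$ over $\k$. As $T_k$ and $\L^k$ are homogeneous polynomial functors of degree $k$, a natural transformation between them is determined by its value on the multilinear element $x_1\ot\dots\ot x_k$, which must itself be multilinear and lie in $\L^k$; the multilinear component of the free Lie power is spanned by the permuted left-normed brackets $[x_{\sigma(1)},\dots,x_{\sigma(k)}]=(\beta_k\sigma)(x_1\ot\dots\ot x_k)$, giving the claim. Granting this, the inductive step runs as follows: given $\phi\in\hom_{\mathrm{Coalg}_{\k}}(J_n,T)$, use the hypothesis to write the restriction $\phi|_{J_{n-1}}$ as a finite product $g$ of generators, so that $\psi\defeq\phi*g^{-1}$ restricts trivially to $J_{n-1}$ and hence lies in $\hom_{\k}(T_n,\L^n)$; by the spanning claim and the identification of convolution with addition, $\psi$ is a convolution product of the degree-$n$ generators, whence $\phi=\psi*g$ is as well.

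I expect the main obstacle to be the spanning claim, specifically the reduction from a natural transformation $T_k\to\L^k$ to a multilinear Lie element. This needs the (characteristic-free) description of natural transformations of multilinear functors by polarization, together with the classical computation that the multilinear part of the free Lie algebra has the permuted left-normed brackets as a spanning set. The verification that convolution is additive on the kernel is elementary, but it must be done carefully, since it is exactly what converts the group-theoretic statement into the linear-algebra spanning problem that the generators (\ref{tensorCoalgebraGenerators}) are designed to solve.
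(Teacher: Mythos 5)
Your proof is correct and follows essentially the same route as the paper's: induction along the James filtration via the short exact sequence (\ref{SESHomProGroup}), reduction to the kernel $\hom_{\k}(T_n,\L^n)$ on which convolution becomes addition, and the identification of the restriction of $\beta_k^T\circ T(\sigma)\circ H_k^{alg}$ to $T_k$ with $\beta_k\circ\sigma$. The only difference is in how surjectivity onto the kernel is justified: you prove the spanning of $\hom_{\k}(T_k,\L^k)$ by the permuted left-normed brackets directly via polarization and the classical description of the multilinear Lie elements, whereas the paper factors an arbitrary kernel element through $H_k^{alg}$ using its universal property, identifies the resulting map $T_k\to T$ with an element of $\k[\Sigma_k]$, and cites Proposition 2.4 of Selick--Wu for the factorization through $\beta_k$ --- the same mathematical content packaged differently.
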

\begin{proof}
It is sufficient to show the statement for each element in inverse system. The statement is trivial for maps in $\hom_{\mathrm{Coalg}_{\k}}(J_1,T)$ and suppose that any element $f\in\hom(J_{k-1},T)$ can be expressed as 
\begin{equation*}
f=\prod_{j=1}^{k-1} f_j
\end{equation*}
Given $f:J_k\to T$ it can be decomposed in a convolution product $f_1*f_2$, where $f_1:J_k\to T$ is a coalgebra map which is an identity map on the $(J_k)_0=\k$, trivial on $(J_k)_i, \ 0<i<k$ and $(f_1)_k=(f)_k$ and $f_2$ is the restriction of $f$ to $J_{k-1}$. Therefore it is sufficient to express $f_1$ as a convolution product of the maps of the form (\ref{tensorCoalgebraGenerators}). We will use universal property of the algebraic James-Hopf map which is direct analogue of the universal property of the classical combinatorial James-Hopf map:
\begin{equation}\label{algebraicJHUniversalProperty}
\xym{
\top :&&&\mathrm{Coalg} :&&\\
&J_k X \epi[d]\mono[r]& JX\ar@{-->}[d]^{H_k}&&J_k V \epi[d]\mono[r]& T(V)\ar@{-->}[d]^{H^{alg}_k}\\\
&X^{\w k}\mono[r]& JX^{\w k}&&V^{\otimes k}\mono[r]& T(V^{\otimes k})
}
\end{equation}
Applying it to $f_1$ (which factors through $T_k$ by construction) we see that
\begin{equation*}
\xym{
J_k\mono[rr]\epi[dd]\ar[dr]^{f_1}&&T\ar[dd]^{H_k^{alg}}\\
&T&\\
T_k\ar[ur]^{\bar f_1}\mono[rr]&&T(T_k)\ar[ul]^{T(\bar f_1)}
}
\end{equation*} 
Since $\hom_{\k}(T_k, T_l)=\k[\Sigma_k]$ if $k=l$ and $0$ otherwise, the map $\bar f_1:T_k\to T$ is completely determined by a certain element $\sum_i a_i\sigma_i$ of a group algebra of the symmetric group $\Sigma_k$. Moreover, since $\bar f_1$ came initially from a coalgebra map, its image lies in $\L_{res}^k$ (see proposition (2.4) in \cite{SelickWuCoalg}) and therefore it factors through $\beta_k$. Finally under isomorphism in (\ref{SESHomProGroup}) sum of permutations maps to a product and we have
\begin{equation*}
f_1=\prod_i \beta^T_k\circ\sigma_i^{a_1}\circ H_k
\end{equation*}
\end{proof}
\begin{rem}
It can be noticed that in contrast with definition (\ref{CohenGroupGenerators}) there are no reduced diagonals in generators of $\hom_{\mathrm{Coalg}_{\k}}(T,T)$ - these maps are trivial on tensor algebra.
\end{rem}

Similarly to l.c.s. tower, using powers of augmentation ideal filtration on a group algebra $\k [F[-]]$ one can form a tower of simplicial abelian groups (denoted by $\k\Gamma_p$)
\begin{equation}\label{AugmentationTower}
\xym{
\k[F]^{\w}=\varprojlim \k[F]/\Delta^n_p\ar[d]&\\
\vdots\ar[d]&\\
\k[F]/\Delta^{n+1}_p\epi[d]&T_n(\bar\k[-])\mono[l]\\
\k[F]/\Delta^n_p\epi[d]&\\
\vdots\epi[d]&\\
\k[F]/\Delta^2_p=\k\oplus\bar\k[-]&
}
\end{equation}
with fibers identified with a homogeneous components of tensor algebra functor $T(\bar \k[-]):\sset\to \mathrm{sMod}_{\k}$ (analogue of Magnus-Witt theorem, see \cite{CurtisSimpl})
\begin{equation*}
T_n(\bar \k [-])\xrightarrow{\cong}\Delta^n_p/\Delta^{n+1}_p, \ x_1\dots x_n\mapsto (x_1-1)\dots (x_n-1)+\Delta^{n+1}_p
\end{equation*}
The natural inclusion $F[-]\to\k [F[-]]$ of Milnor's construction to its group algebra can be extended to a level of towers
\begin{equation*}
F[-]/\gp_n\to \k [F[-]]/\Delta^n_p
\end{equation*}
in a way that on the level of fibers one get inclusion $\L_{res}^n\to T_n$ of homogeneous components of free restricted Lie algebra to homogeneous components of tensor algebra as primitive elements in degree $n$ (see also \cite{MilnorMoore}).

As before $E^0$ will denote the functor of fibers of tower, in case of towers (\ref{towers}) and (\ref{AugmentationTower}) it takes values in graded objects of $\mathrm{sMod}_{\k}$. Therefore there is a map
\begin{equation}\label{realizationMap}
\hom_{\sset}(\Gamma_p,\Gamma_p)\xrightarrow{E^0\circ \k[-]}\hom_{\mathrm{sCoalg}_{\k}}(T(\bar \k[-]),T(\bar \k[-]))
\end{equation}
\begin{thm}\label{simplicialRealizationTheorem}
(\ref{realizationMap}) is a well-defined epimorphism of groups, i.e. every coalgebra natural transformation of functor $T(\bar \k[-])$ to itself can be lifted up to a natural self-transformation of tower $\Gamma_p\to \Gamma_p$
\end{thm}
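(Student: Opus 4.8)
The plan is to reduce surjectivity to the topological generators of the target pro-group furnished by Theorem \ref{tensorCoalgebraGeneratorsTheorem}, to realize each such generator by a single Cohen group generator, and to combine these using the fact that $E^0\circ\k[-]$ is a homomorphism of pro-groups.

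First I would check that $E^0\circ\k[-]$ is a well-defined homomorphism with values in coalgebra maps. Applying the group algebra functor to a self-transformation of $\Gamma_p$ and using Magnus--Witt, $\gp_n=F[-]\cap(1+\Delta^n_p)$, produces a self-transformation of the augmentation tower (\ref{AugmentationTower}); passing to fibers yields a $\k$-module self-transformation of $T(\bar\k[-])$. This transformation is automatically a coalgebra map, because the Hopf comultiplication $g\mapsto g\otimes g$ on $\k[F[-]]$ descends on associated graded to the comultiplication $\psi$ that makes the generators of $T$ primitive. Moreover, the pointwise multiplication on $\hom_{\sset}(\Gamma_p,\Gamma_p)$ is induced by the group multiplication $\mu$ of $F[-]$ together with the diagonal; under $E^0\circ\k[-]$ the diagonal induces $\psi$ and $\mu$ induces the algebra multiplication $m$, so pointwise multiplication corresponds exactly to the convolution product (\ref{convolutionProduct}). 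Hence $E^0\circ\k[-]$ is a homomorphism of pro-groups.

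Next I would identify the fiberwise effect of the Cohen group generators. Taking $k=l$ in (\ref{CohenGroupGenerators}) makes the reduced diagonal the identity, so the generator becomes $W_k\circ F[\sigma]\circ H_k$; this suffices since, by the Remark following Theorem \ref{tensorCoalgebraGeneratorsTheorem}, the generators (\ref{tensorCoalgebraGenerators}) of $\hom_{\mathrm{Coalg}_{\k}}(T,T)$ involve no reduced diagonals. On fibers the permutation $F[\sigma]$ induces $T(\sigma)$, and by the Corollary to Theorem \ref{JHandMagnus} the Whitehead product $W_k$, being an iterated commutator, sends group commutators to ring commutators and hence induces $\beta_k$ on the degree-$k$ component. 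The decisive identification is $E^0(\k[H_k])=H^{alg}_k$: the commutative square of Theorem \ref{JHandMagnus} realizes $\bar H_k$ as the degree-$k$ component of the Magnus embedding, which on the associated graded of the augmentation filtration is precisely the map characterized by the universal property (\ref{algebraicJHUniversalProperty}) defining $H^{alg}_k$. Thus each Cohen generator with $l=k$ maps to the algebraic generator $\beta^T_k\circ T(\sigma)\circ H^{alg}_k$.

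Finally, given any coalgebra self-map $f$ of $T(\bar\k[-])$, Theorem \ref{tensorCoalgebraGeneratorsTheorem} writes $f=\prod_k f_k$ as a convolution product of generators (\ref{tensorCoalgebraGenerators}). Choosing for each $f_k$ the corresponding Cohen generator $g_k$ and forming the pointwise product $\prod_k g_k$, which converges by the pro-group completeness built into the closure defining $H_\infty$, I obtain an element of $\hom_{\sset}(\Gamma_p,\Gamma_p)$ whose image under the homomorphism $E^0\circ\k[-]$ equals $\prod_k f_k=f$. I expect the main obstacle to be the fiberwise identification $E^0(\k[H_k])=H^{alg}_k$, since it demands matching the combinatorial, Fox-calculus description of the simplicial James--Hopf map against the purely coalgebraic universal property of its algebraic counterpart; once this is established, the remaining steps follow formally from the homomorphism property and Theorem \ref{tensorCoalgebraGeneratorsTheorem}.
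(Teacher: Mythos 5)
Your proposal follows the paper's own strategy almost exactly: first establish that $E^0\circ\k[-]$ is a group homomorphism carrying pointwise multiplication to the convolution product, then reduce surjectivity to the topological generators of Theorem (\ref{tensorCoalgebraGeneratorsTheorem}) and match them with Cohen group generators (with $l=k$ so the reduced diagonal drops out), identifying $E^0(\k[F[\sigma]])=T(\sigma)$, $E^0(\k[W_k])=\beta^T_k$, and $E^0(\k[H_k])=H^{alg}_k$. The one place you genuinely diverge is the last, decisive identification: the paper verifies that $E^0(\k[H_k])$ satisfies the universal property (\ref{algebraicJHUniversalProperty}) by restricting to the James construction via diagram (\ref{connectionWithClassicalJH}) and using that the inclusion $J\to F$ induces an isomorphism $E^0(\k[J])\cong E^0(\k[F])$, so the question reduces to the classical monoid-level James--Hopf map whose algebraic avatar is literally Cohen--Taylor's $H^{alg}_k$; you instead invoke the Magnus-embedding square of Theorem (\ref{JHandMagnus}). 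Your route is viable but, as stated, only controls the component of $E^0(\k[H_k])$ landing in tensor degree one of $T(V^{\ot k})$ (that square composes $H_k$ with abelianization); to get the full universal property you also need that on $J_k$ the components landing in tensor degree $\geq 2$ of the target vanish, which is supplied by the weighted-filtration bound of Theorem (\ref{JHLowerCentral}) rather than by Theorem (\ref{JHandMagnus}) itself. Also, your appeal to the Corollary of Theorem (\ref{JHandMagnus}) for $E^0(\k[W_k])=\beta^T_k$ is misattributed (that corollary concerns $\bar H_n$, not $W_k$), though the claim itself is the standard fact that $g\mapsto g-1$ carries iterated group commutators to iterated ring commutators, which the paper treats as clear.
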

\begin{proof}
First we will show that $E^0\circ \k$ is a well defined homomorphism, i.e. for any $f:\Gamma_p\to\Gamma_p$ $E^0(\k[f])$ is a coalgebra map and products are preserved. To see this we pass to the limits of the towers:
\begin{equation*}
\xym{
\hom(\Gamma_p,\Gamma_p)\ar[d]_{\varprojlim}\ar[r]^{\k}&\hom(\k\Gamma_p,\k\Gamma_p)\ar[d]^{\varprojlim}\ar[r]^{E^0}&\hom(T,T)\\
\hom(F^{\w}_p[-],F^{\w}_p[-])\ar[r]_{\varprojlim\k}&\hom(\k[F]^{\w},\k[F]^{\w})\ar[ur]_{E^0}
}
\end{equation*}

here we consider $\k[F]^{\w}$ as a complete coalgebra with a filtration, induced from $\Delta_p$-adic filtration on $\k[F]$ and $E^0$ functor in this context is a functor of passing to associate graded coalgebra. $E^0$ respects (completed) tensor products:
\begin{equation}\label{E0Products}
E^0(-\hat \otimes -)=E^0(-\otimes -)=E^0(-)\otimes E^0(-)
\end{equation}

$\varprojlim\k$ is an extension of group algebra functor $\k[-]:\mathrm{Grp}\to\mathrm{Hopf}_{\k}$ to completions and therefore for any map $g:F^{\w}_p\to F^{\w}_p$ $\varprojlim\k (g)$ is a map of complete coalgebras. This shows that the map (\ref{realizationMap}) is well-defined.

By (\ref{E0Products}) and since $\varprojlim$ commutes with direct products and $\varprojlim \k (F^{\w}_p\times F^{\w}_p)=\k F^{\w}\hat\otimes \k F^{\w}$ $E^0\circ \k$ sends pointwise multiplication 
\begin{equation*}
F/\gp_n\xrightarrow{\Delta}F/\gp_n\times F/\gp_n\xrightarrow{f\times g}F/\gp_n\times F/\gp_n\xrightarrow{\mu} F/\gp_n
\end{equation*}
to the convolution product (\ref{convolutionProduct}).

To prove that $E^0\circ \k[-]$ is surjective we will show that generators of Cohen group for towers maps to generators of $\hom_{\mathrm{sCoalg}_{\k}}(T(\bar \k[-]),T(\bar \k[-]))$, which are described in (\ref{tensorCoalgebraGeneratorsTheorem}). It is clear that $E^0(\k[F[\sigma]])=T(\sigma)$ and $E^0(\k[W_n])=\beta^T_n$. Only universal property (\ref{algebraicJHUniversalProperty}) of $H^{alg}_k$ is used in theorem (\ref{tensorCoalgebraGeneratorsTheorem}) and $E^0(\k[H_k])$ satisfy it since $E^0(\k[J])\equiv E^0(\k[F])$ - associated graded of free group ring and free monoid ring are naturally isomorphic (isomorphism induced by inclusion $J\to F$). The result follows from diagrams (\ref{algebraicJHUniversalProperty}) and (\ref{connectionWithClassicalJH}).
\end{proof}

We proceed to a decomposition of spectral sequence (\ref{spectralSeq}). First we translate the well-known result about homotopy idempotents of $H$-spaces to $\sset$.
\begin{lem}\label{decompositionLemma}
Let $G$ be a connected simplicial group and $f: G\to G$ be a simplicial self-map, such that
\begin{equation*}
f_*:\pi_* G\to\pi_* G
\end{equation*}

is an idempotent. Then, as simplicial set, $G$ is weak equivalent to a product $A\times B$, 
\begin{equation*}
A=\mathrm{colim} \,f, \ B=\mathrm{colim} \,g
\end{equation*}
where $g=[\mathrm{id}]f^{-1}$ denotes a complement to $f$ in the group $\hom_{\sset}(G,G)$
\end{lem}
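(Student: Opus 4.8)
The plan is to follow the classical $H$-space splitting argument, carried out simplicially, and to produce a single weak equivalence $G \to A \times B$ by computing its effect on homotopy groups. First I would record the consequences of the hypothesis at the level of $\pi_*$. Because $G$ is a simplicial group, $\hom_{\sset}(G,G)$ is a group under pointwise multiplication, and the operation it induces on each $\pi_n G$ ($n\geq 1$) agrees, by the Eckmann--Hilton argument, with the usual abelian group law; thus for self-maps $\phi,\psi$ one has $(\phi\psi)_* = \phi_* + \psi_*$ and $(\phi^{-1})_* = -\phi_*$. Applying this to $g=[\mathrm{id}]f^{-1}$ gives $g_* = 1 - f_*$ on $\pi_* G$. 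Since $f_*$ is idempotent, so is $g_*$, and moreover $f_* + g_* = 1$ and $f_* g_* = g_* f_* = 0$; hence $f_*$ and $g_*$ are complementary idempotents and
\begin{equation*}
\pi_* G = \im f_* \oplus \im g_*,
\end{equation*}
with $f_*,g_*$ the two projections. Connectedness of $G$ ensures $\pi_0$ is trivial, so no low-dimensional correction is needed.

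Next I would interpret $A=\mathrm{colim}\,f$ and $B=\mathrm{colim}\,g$ as mapping telescopes, i.e.\ sequential homotopy colimits of $G \xrightarrow{f} G \xrightarrow{f}\cdots$ and of the analogous tower for $g$; this is the homotopy-invariant model that is needed precisely because $f$ is assumed idempotent only up to homotopy. Since homotopy groups commute with sequential homotopy colimits of simplicial sets (simplicial groups being Kan by Moore's theorem),
\begin{equation*}
\pi_* A = \mathrm{colim}\bigl(\pi_* G \xrightarrow{f_*} \pi_* G \xrightarrow{f_*} \cdots\bigr) = \im f_*,
\end{equation*}
the colimit of an idempotent being its image, and the canonical map $j_A : G \to A$ into the initial stage of the telescope induces exactly the projection $f_* : \pi_* G \to \im f_*$. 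Symmetrically $\pi_* B = \im g_*$ and $(j_B)_* = g_*$, and $A,B$ are connected as homotopy colimits of the connected $G$.

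Finally I would assemble the comparison map as the composite
\begin{equation*}
\Phi : G \xrightarrow{\Delta} G \times G \xrightarrow{j_A \times j_B} A \times B .
\end{equation*}
On homotopy it sends $x \mapsto (f_* x, g_* x)$, which is exactly the canonical isomorphism $\pi_* G \xrightarrow{\cong} \im f_* \oplus \im g_* = \pi_*(A\times B)$ obtained in the first step. Hence $\Phi$ is an isomorphism on all homotopy groups, and since every object in sight is a simplicial set this is the asserted weak equivalence $G \simeq A\times B$. I expect the main obstacle to be the homotopy-theoretic bookkeeping in the middle step: one must genuinely use the telescope rather than the strict colimit so that $\pi_*$ passes through the colimit, and one must verify that the structure map $j_A$ realizes the idempotent $f_*$ itself (not merely some surjection onto $\im f_*$). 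Once that identification is pinned down, the splitting $\pi_*G = \im f_* \oplus \im g_*$ forces $\Phi_*$ to be an isomorphism.
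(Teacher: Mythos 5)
Your argument is correct and is essentially the paper's own proof, written out in more detail: the paper likewise splits $\pi_*G$ by the complementary idempotents $f_*$ and $g_*=1-f_*$, identifies $\pi_i\,\mathrm{colim}\,f$ with $\im f_i$, and realizes the splitting by the composite $G\xrightarrow{\Delta}G\times G\to A\times B$. The only cosmetic difference is that you model $\mathrm{colim}\,f$ by a mapping telescope, whereas the paper uses the strict sequential colimit (its subsequent lemma on colimits and fibrations), which already computes $\pi_*$ correctly here because $G$ is a simplicial group and hence Kan.
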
 
\begin{proof}
$f_*$ is a (graded) idempotent element of $\mathrm{End}(\bigoplus_i \pi_i G)$ and  therefore there is an isomorphism
\begin{equation}\label{decompositionSpaceHomotopyLevel}
\bigoplus_i \pi_i G \equiv \bigoplus_i \im{f_i}\oplus \im(\mathrm{id}-f_i)
\end{equation} 
here $\im{f_i}=\mathrm{colim}{f_i}=\pi_i \mathrm{colim} f, \ \im(\mathrm{id}-f_i)=\mathrm{colim}(\mathrm{id}-f_i)=\pi_i \mathrm{colim} [\mathrm{id}][f^{-1}]$
and isomorphism (\ref{decompositionSpaceHomotopyLevel}) can be realized by
\begin{equation*}
G\xrightarrow{\Delta}G\times G\to A\times B
\end{equation*}
\end{proof}
Well known results about filtered colimits and fibrations in $\sset$ will be organized in the next lemma for simplicity.
\begin{lem}\label{colimitLemma}
Let $X_0\to X_1 \to\dots\to X$ be a sequence of simplicial sets and simlplicial maps between them with colimit $X$ and
$$
\xym{
X_0\ar[r]\ar[d]^{f_0}&X_1\ar[r]\ar[d]^{f_1}&\dots\ar[r]&X\ar[d]^f\\
Y_0\ar[r]&Y_1\ar[r]&\dots\ar[r]&Y
}
$$
be a map of such sequences. Then:
\begin{enumerate}[\upshape(i)]
\item $\pi_* X=\mathrm{colim}\,\{\pi_*(X_0)\to\pi_*(X_1)\to\dots\}$ 
\item if $f_n$ are Kan fibrations for all $n$ then $f$ is also Kan fibration.
\item the fiber of $f$ is a colimit of fibers of $f_n$
\end{enumerate}
\end{lem}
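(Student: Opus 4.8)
The plan is to prove the three statements of Lemma \ref{colimitLemma} separately, since they are all standard facts about the interaction between filtered colimits and homotopy theory in $\sset$, and the key point throughout is that a filtered (sequential) colimit is built out of compact diagrams. First I would establish (i). The essential observation is that the standard simplices $\Delta^n$ are compact objects in $\sset$, so any map $\Delta^n \to X$ from a finite simplicial set into the colimit $X = \mathrm{colim}\, X_j$ factors through some $X_j$; likewise any homotopy $\Delta^n \times \Delta^1 \to X$ factors through some stage. Passing to homotopy classes of maps out of spheres (modeled by $\partial\Delta^{n+1}/\Delta^n$ or the Kan loop/suspension models) then shows that every element of $\pi_* X$ comes from some $\pi_*(X_j)$ and that two elements agreeing in $X$ already agree at a finite stage, which is exactly the statement $\pi_* X = \mathrm{colim}\,\pi_*(X_j)$. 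Here one should be slightly careful and assume (as is implicit in the setup, since the $X_j$ are the stages of an $\Omega\Sigma$-type tower) that all simplicial sets in sight are Kan, or replace $\pi_*$ by the homotopy groups of a fibrant replacement; the compactness argument is unaffected.

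For (ii), I would invoke the fact that a map is a Kan fibration if and only if it has the right lifting property against all horn inclusions $\Lambda^n_k \hookrightarrow \Delta^n$. Given a lifting problem for $f$ against such a horn inclusion, both $\Lambda^n_k$ and $\Delta^n$ are finite, hence compact, so the square factors through a finite stage $f_j$ for some $j$; since $f_j$ is a Kan fibration by hypothesis, the lift exists at that stage and then maps into the colimit to solve the original problem. This is the cleanest route and I expect it to go through with no difficulty. Part (iii) is then the crux, and I expect the main obstacle to lie here, because fibers are pullbacks and pullbacks do not commute with colimits in general; what saves us is that we are taking a filtered colimit and pulling back along a cofibration-like diagram. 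Concretely, writing $F_j$ for the fiber of $f_j$ over the basepoint and $F$ for the fiber of $f$, I would argue that filtered colimits in $\sset$ are exact (they commute with finite limits, since $\sset$ is a presheaf category and filtered colimits of sets commute with finite limits levelwise). Thus $\mathrm{colim}\, F_j = \mathrm{colim}(X_j \times_{Y_j} *) = (\mathrm{colim}\, X_j)\times_{(\mathrm{colim}\, Y_j)} * = X \times_Y * = F$, where the middle equality uses precisely that the pullback defining each fiber is a finite limit and filtered colimits preserve it.

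The one subtlety worth flagging in part (iii) is that "the fiber of $f$" should be understood as the homotopy fiber, and the identification of the strict (simplicial) fiber with the homotopy fiber requires that the maps $f_j$ be Kan fibrations—which is exactly the hypothesis carried over from (ii)—so that each $F_j$ is already the correct homotopy-invariant object and so that $f$ itself is a Kan fibration by (ii), making its strict fiber $F$ compute the homotopy fiber as well. Once this is in place, the levelwise exactness of filtered colimits of sets does all the work, and no further homotopical input is needed. I would therefore present (iii) last and lean on (ii) for the fibrancy needed to pass freely between strict and homotopy fibers.
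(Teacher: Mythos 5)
Your proposal is correct. The paper in fact gives no proof of this lemma at all---it is stated explicitly as an organization of ``well known results about filtered colimits and fibrations in $\sset$''---and your argument (compactness of finite simplicial sets for (i) and (ii), commutation of filtered colimits with finite limits in a presheaf category for (iii), plus the fibrancy caveats) is precisely the standard justification the paper is implicitly relying on.
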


Given arbitrary natural coalgebra decomposition of tensor algebra $T\simeq A\ot B$, it can be extended to $\sset$ by switching from functor $T:\mathrm{Mod}_{\k}\to\mathrm{Coalg}_{\k}$ to $T(\bar\k [-]):\sset\to \mathrm{sCoalg}_{\k}$. For any $X\in\sset$ composition of retraction and inclusion
\begin{equation*}
f:T(\bar\k[X])\xrightarrow{r_A} A(\bar\k[X])\xrightarrow{i_A} T(\bar\k[X])
\end{equation*}
is an idempotent in $\hom_{\mathrm{sCoalg}_{\k}}(T(\bar\k[X]),T(\bar\k[X]))$ and $A(\bar\k[X])\simeq\mathrm{colim} f$. This idempotent give rise to a map of towers $\varphi:\Gamma_p\to \Gamma_p$ by simplicial realization theorem (\ref{simplicialRealizationTheorem}), on each level $n$ denoted by $\varphi_n: F[X]/\gp_n\to F[X]/\gp_n$. 
\begin{thm}\label{towerDecomposition}
For any natural (simplicial) coalgebra decomposition $T\cong A\otimes B$ there exists towers of fibrations $\EuScript A$ and $\EuScript B$ such that
\begin{equation*}
\Gamma_p\simeq \EuScript A\times\EuScript B
\end{equation*}
and 
such that its fibers are functorially given by primitive elements of simplicial coalgebras $A$ and $B$:
\begin{equation*}
E^0(\EuScript A)\simeq PA, \ E^0(\EuScript B)\simeq PB
\end{equation*}
\end{thm}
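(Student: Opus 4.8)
The plan is to realize the coalgebra idempotent $f=i_A r_A$ at the level of the whole tower, split each stage by a mapping-telescope construction, and then reassemble the telescopes into towers, reading off the fibers from the action of $f$ on primitives. By the simplicial realization theorem (\ref{simplicialRealizationTheorem}) the idempotent $f$ lifts to a self-map $\varphi\colon\Gamma_p\to\Gamma_p$ of towers with $E^0(\varphi)=f$, and its convolution-complement $\psi=[\mathrm{id}]\varphi^{-1}$, computed in the group $\hom_{\sset}(\Gamma_p,\Gamma_p)$, is again a map of towers with $E^0(\psi)=\mathrm{id}-f$. Writing $\varphi_n,\psi_n\colon F[X]/\gp_n\to F[X]/\gp_n$ for the maps induced on the $n$-th stage, I set $\EuScript A_n=\mathrm{colim}\,\varphi_n$ and $\EuScript B_n=\mathrm{colim}\,\psi_n$, the mapping telescopes of the respective self-maps.

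The first genuine point, and the one I expect to be the main obstacle, is that each $\varphi_n$ must be a homotopy idempotent, since the decomposition lemma (\ref{decompositionLemma}) requires $(\varphi_n)_*$ to be an honest idempotent on $\pi_*$. The lift furnished by (\ref{simplicialRealizationTheorem}) only satisfies $E^0(\varphi)=f=f^2=E^0(\varphi^2)$, so a priori $\varphi_n$ and $\varphi_n^2$ agree only on the associated graded of the tower filtration. To upgrade this I would lift $f$ to a genuine composition-idempotent of the pro-group $\hom_{\sset}(\Gamma_p,\Gamma_p)$, exploiting that this pro-group is complete for the filtration by tower level whose successive kernels are the abelian groups appearing in (\ref{SESHomProGroup}); the standard inductive idempotent-lifting argument along a complete filtration with abelian graded pieces then produces, stage by stage and compatibly in $n$, a lift $\varphi$ with each $\varphi_n$ idempotent. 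Once $(\varphi_n)_*$ is idempotent, the decomposition lemma (\ref{decompositionLemma}) gives $F[X]/\gp_n\simeq\EuScript A_n\times\EuScript B_n$, and this equivalence is natural in the tower variable because the splitting map $G\xrightarrow{\Delta}G\times G\to A\times B$ is assembled from $\varphi,\psi$, which commute with the structure maps $F[X]/\gp_{n+1}\to F[X]/\gp_n$.

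Next I would promote these stagewise equivalences to an equivalence of towers. Because $\varphi$ and $\psi$ are maps of towers, the structure maps $F[X]/\gp_{n+1}\to F[X]/\gp_n$ induce maps $\EuScript A_{n+1}\to\EuScript A_n$ and $\EuScript B_{n+1}\to\EuScript B_n$ on telescopes, so $\EuScript A$ and $\EuScript B$ are genuine towers. The structure maps of $\Gamma_p$ are Kan fibrations, and $\EuScript A_{n+1}\to\EuScript A_n$ is precisely the filtered colimit of these fibrations along the telescope direction; by the colimit lemma (\ref{colimitLemma})(ii) it is again a Kan fibration, so $\EuScript A$, and likewise $\EuScript B$, is a tower of fibrations. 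Compatibility of the stagewise product decompositions then yields $\Gamma_p\simeq\EuScript A\times\EuScript B$.

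Finally, to identify the fibers I would invoke the colimit lemma (\ref{colimitLemma})(iii): the fiber of $\EuScript A_{n+1}\to\EuScript A_n$ is the telescope of the fibers of $F[X]/\gp_{n+1}\to F[X]/\gp_n$, namely the restricted Lie power $\L_{res}^n$, under the self-map induced by $\varphi$. That self-map is exactly $E^0(\varphi)=f$ restricted to the primitives $P(T)\cong\bigoplus_n\L_{res}^n$ of the tensor coalgebra; since $f$ is an honest idempotent already on $E^0$, the telescope computes its image, which is $PA$ because $A=\im f$ is a coalgebra retract and the functor of primitives is natural. Hence $E^0(\EuScript A)\simeq PA$, and symmetrically $E^0(\EuScript B)\simeq PB$. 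I expect the idempotent-lifting step of the second paragraph to be the only real difficulty; the remaining steps are formal consequences of the decomposition and colimit lemmas.
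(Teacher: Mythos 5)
Your overall architecture matches the paper's: lift the idempotent via Theorem (\ref{simplicialRealizationTheorem}), split each stage $F[X]/\gp_n$ by mapping telescopes of $\varphi_n$ and of its convolution complement, use Lemma (\ref{colimitLemma}) to assemble the telescopes into towers of fibrations, and read off $E^0\EuScript A\simeq PA$ from $E^0\varphi=f$ together with commutation of $E^0$ and $P$ with colimits. You have also correctly isolated the one non-formal step: the lift $\varphi_n$ is only known to agree with an idempotent on the associated graded, whereas Lemma (\ref{decompositionLemma}) needs an honest idempotent on $\pi_*$.

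Your proposed fix for that step, however, has a genuine gap. The ``standard inductive idempotent-lifting argument along a complete filtration with abelian graded pieces'' is a statement about complete filtered \emph{rings}, where the approximate idempotent is corrected using both operations and two-sided distributivity. Here the idempotency you need is with respect to \emph{composition}, while the filtration in (\ref{SESHomProGroup}) has abelian quotients with respect to the \emph{convolution} product; composition distributes over the pointwise product only on one side (one has $(f*g)\circ h=(f\circ h)*(g\circ h)$, but not the symmetric identity unless the outer map is a homomorphism), so the Hensel-type lifting scheme does not apply. Moreover, (\ref{SESHomProGroup}) describes the filtration of $\hom_{\mathrm{Coalg}_{\k}}(J_n,T)$, not of $\hom_{\sset}(\Gamma_p,\Gamma_p)$, and the kernel of the realization map $E^0\circ\k[-]$ is not identified, so you have no control over the graded pieces through which you propose to lift. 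The paper avoids all of this with a finiteness argument: since $X$ is levelwise finite, each $(F[X]/\gp_n)_k$ is a finite group, hence the powers of $(\varphi_{X,n})_k$ are eventually periodic and some power $(\varphi_{X,n})_k^{N}$ is an honest idempotent compatible with the simplicial and tower structure maps; since the telescope of $\varphi_{X,n}$ coincides with that of $\varphi_{X,n}^{N}$, Lemma (\ref{decompositionLemma}) applies and the remainder of your argument goes through unchanged. You should replace your second paragraph by this iteration/profiniteness step (or an equivalent compactness argument in the profinite monoid of composition endomorphisms) rather than by idempotent lifting along the convolution filtration.
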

\begin{proof}
Given an $X\in \sset$, and decomposition $T(\k[X])\simeq A(\k[X])\ot B(\k[X])$ let $f_X: T(\k[X])\to T(\k[X])$ be an idempotent, corresponding to $A(\k[X])$, as before, its realization on the level of towers is $\varphi_X:\Gamma_p(X)\to\Gamma_p(X)$. Since in each dimension $k$ $X_k$ is a finite set, for any $n$ $(F[X]/\gp_n)_k$ are finite groups. Hence, after sufficiently many iterations $\varphi_{X,n}:F[X]/\gp_n\to F[X]/\gp_n$ will become an idempotent, i.e. there exist number $N(n,k)$, such that 
\begin{equation}\label{idempotentMap}
(\varphi_{X,n})_k^{N(n,k)}:(F[X]/\gp_n)_k\to (F[X]/\gp_n)_k
\end{equation}
 are idempotents. Since $\varphi_X$ was initially map of towers, $\varphi_X^N$ is also one, which follows from commutativity of the following diagrams:
\begin{align*}
\xym{
(F[X]/\gp_n)_k\ar@<.5ex>[r]\ar@<2ex>[r]_{\dots}\ar[d]_{(\varphi_{X,n})_k^{N(n,k)}}&(F[X]/\gp_n)_{k-1}\ar@<.5ex>[l]\ar@<2ex>[l]_{\dots}\ar[d]^{(\varphi_{X,n})_k^{N(n,k-1)}}\\
(F[X]/\gp_n)_k\ar@<.5ex>[r]\ar@<2ex>[r]_{\dots}&(F[X]/\gp_n)_{k-1}\ar@<.5ex>[l]\ar@<2ex>[l]_{\dots}
} & 
\xym{
F[X]/\gp_n\ar[rr]^{(\varphi_{X,n})_k^{N(n)}}\epi[d]&&F[X]/\gp_n\epi[d]\\
F[X]/\gp_{n-1}\ar[rr]_{(\varphi_{X,n-1})_k^{N(n-1)}}&&F[X]/\gp_{n-1}
}\\
\end{align*}
Now lemma (\ref{decompositionLemma}) is applied to groups $F[X]/\gp_n$ and idempotents (\ref{idempotentMap}) to get a decompositions 
\begin{equation}\label{decompositionOneLevel}
F[X]/\gp_n\simeq \EuScript A_n(X)\times \EuScript B_n(X), \ \EuScript A_n(X)=\mathrm{colim}\, \varphi_{X,n}^{N(n)}=\mathrm{colim}\, \varphi_{X,n}
\end{equation}
This weak equivalence is consistent with a maps $\xym{F[X]/\gp_n\epi[r]&F[X]/\gp_{n-1}}$ and by lemma (\ref{colimitLemma}) the natural maps $\EuScript A_n(X)\to \EuScript A_{n-1}(X)$ are fibrations, similarly for $\EuScript B_n(X)$, so (\ref{decompositionOneLevel}) gives indeed decomposition of the whole tower $\Gamma_p (X)$, and this decomposition is functorial with respect to $X$.

By commutativity of the following diagram 
\begin{equation*}
\xym{
\hom(\Gamma_p,\Gamma_p)\ar[r]^{\k}\ar[d]_{E^0}&\hom(\k\Gamma_p,\k\Gamma_p)\ar[d]^{E^0}\\
\hom(\L,\L)&\hom(T,T)\ar[l]^P
}
\end{equation*}
and lemma (\ref{colimitLemma}):
\begin{equation*}
E^0\EuScript A =E^0 \mathrm{colim} \,\varphi\simeq \mathrm{colim}\, E^0 \varphi= \mathrm{colim} \,Pf=P (\mathrm{colim}\, f)=P A
\end{equation*} 
\end{proof}
\begin{rem}
After passing to the limits of towers one have functorial decomposition of the pro-p completion of Milnor's construction $F[-]$
\begin{equation}\label{limitDecomposition}
F^{\w}_p\simeq \varprojlim \EuScript A\times \varprojlim \EuScript B
\end{equation}
and since $|F^{\w}_p[X]|\simeq_p \Omega\Sigma |X|$ (\ref{limitDecomposition}) can be seen as a translation of the classical decomposition (\ref{classicalDecomposition}) to the simplicial setting.
\end{rem}
\begin{cor}\label{SSDecompositionCorollary}
Any natural coalgebra decomposition $T\simeq A\ot B$ induce a decomposition of spectral sequence
\begin{equation}\label{SSDecomposition}
E^1_{p,q}=\pi_p \L_{res}^q\ra \pi_{p+q} F, \ E^r_{p,q}=E^r_{p,q}(A)\oplus E^r_{p,q}(B)
\end{equation}
as a functor on $\sset$, with first pages of $E^r_{p,q}(A),\, E^r_{p,q}(B)$ given by homotopy groups of primitive elements of simplicial coalgebras $A$ and $B$:
\begin{equation*}
E^1_{p,q}(A)=\pi_p (PA)_q, \ E^1_{p,q}(B)=\pi_p (PB)_q
\end{equation*}
\end{cor}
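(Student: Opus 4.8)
The plan is to deduce the corollary directly from the tower decomposition of Theorem \ref{towerDecomposition} by passing to the associated exact couples. Recall that, as in the proof of Theorem \ref{actionOnSS}, the spectral sequence (\ref{spectralSeq}) is obtained from the exact couple assembled out of the long exact homotopy sequences of the fibrations $F[X]/\gp_{n} \to F[X]/\gp_{n-1}$, whose fibers are the functor of fibers $E^0\Gamma_p = \L_{res}^{\bullet}$. So the first thing I would record is the formal fact that a tower of fibrations determines its spectral sequence functorially through this exact couple, and that a levelwise weak equivalence of towers respecting the fibration structure induces an isomorphism of exact couples, hence an isomorphism of spectral sequences from the $E^1$-page onward.

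Next I would analyze the behaviour of this exact couple under a product of towers. By Theorem \ref{towerDecomposition} we have $\Gamma_p \simeq \EuScript A \times \EuScript B$ as towers of fibrations, with fibers $E^0(\EuScript A) \simeq PA$ and $E^0(\EuScript B) \simeq PB$. Since homotopy groups send a (two-factor) product of simplicial sets to a direct sum, $\pi_*(\EuScript A_n \times \EuScript B_n) \cong \pi_*(\EuScript A_n) \oplus \pi_*(\EuScript B_n)$ in each degree, and the projection and inclusion maps of the product split every long exact homotopy sequence of $\Gamma_p$ as the direct sum of those of $\EuScript A$ and $\EuScript B$. Consequently the exact couple of $\Gamma_p$ decomposes as a direct sum of the exact couples of $\EuScript A$ and $\EuScript B$; because forming the derived couple commutes with direct sums, this splitting persists to every page, yielding the decomposition $E^r_{p,q} = E^r_{p,q}(A) \oplus E^r_{p,q}(B)$ of (\ref{SSDecomposition}).

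It then remains to identify the first pages. The $E^1$-term of the spectral sequence associated to any tower is the homotopy of its fibers, so for the summand coming from $\EuScript A$ we obtain, using $E^0(\EuScript A) \simeq PA$, that $E^1_{p,q}(A) = \pi_p\, E^0(\EuScript A)_q \cong \pi_p (PA)_q$, and likewise $E^1_{p,q}(B) = \pi_p (PB)_q$. Functoriality in $X$ is automatic, since the tower decomposition of Theorem \ref{towerDecomposition}, the passage to exact couples, and the identification of fibers are all natural in $X$.

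I expect the main obstacle to be purely a matter of bookkeeping: one must check that the weak equivalence $\Gamma_p \simeq \EuScript A \times \EuScript B$ is genuinely compatible with the fibration structure at the level of \emph{fibers}, so that the fiber homotopy sequences — and not merely the total homotopy groups — split as direct sums. This is exactly what the identifications $E^0(\EuScript A) \simeq PA$ and $E^0(\EuScript B) \simeq PB$ together with Lemma \ref{colimitLemma} guarantee, so no essentially new difficulty arises beyond carefully matching the two exact couples.
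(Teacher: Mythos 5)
Your argument is correct and is exactly the (omitted) argument the paper intends: the paper states this corollary with no proof, treating it as immediate from Theorem \ref{towerDecomposition}, and your route --- product of towers gives a direct sum of exact couples, derived couples commute with direct sums, and $E^1$ is identified via $E^0(\EuScript A)\simeq PA$, $E^0(\EuScript B)\simeq PB$ --- is the standard way to make that precise. No gap.
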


In conclusion we will formulate the result about accelerated functorial sub-spectral sequence of the unstable Adams spectral sequence for suspensions, which is the direct corollary of (\ref{SSDecompositionCorollary})
\begin{cor}\label{acceleration}
Let $A^{min}$ be a minimal functorial coalgebra retract of $T$ and $T=A^{min}\otimes B^{max}$ be a corresponding decomposition. Then sub-spectral sequence $E^r_{p,q}(\EuScript{A}^{min})$ of $E^r_{p,q}$ have non-trivial cells only in columns with numbers $p^k$
\end{cor}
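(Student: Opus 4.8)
The plan is to obtain the statement by combining Corollary~\ref{SSDecompositionCorollary} with the purely algebraic concentration result for the minimal retract $A^{min}$ established in \cite{SelickWuCoalg}. First I would apply Corollary~\ref{SSDecompositionCorollary} to the decomposition $T \cong A^{min}\ot B^{max}$. This exhibits $E^r_{p,q}(\EuScript{A}^{min})$ as a direct summand of the full spectral sequence $E^r_{p,q}$, whose first page is identified functorially as
\begin{equation*}
E^1_{p,q}(\EuScript{A}^{min}) = \pi_p\,(P A^{min})_q ,
\end{equation*}
where the column index $q$ records the Lie-power (filtration) degree and $(P A^{min})_q$ is the degree-$q$ homogeneous component of the primitives of the simplicial coalgebra $A^{min}(\bar\k[-])$. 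Hence the $q$-th column of this sub-spectral sequence is non-trivial only when $(P A^{min})_q$ is non-zero in some simplicial dimension.

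It therefore suffices to show that $(P A^{min})_q = 0$ whenever $q$ is not a power of $p$. This is exactly the statement, recalled in the introduction, that the primitives of the minimal coalgebra retract of the tensor algebra functor $T$ sit only in degrees $p^k$; I would quote it directly from \cite{SelickWuCoalg} rather than re-derive it, since the contribution of the present paper is the simplicial realization of the corresponding idempotent, not the algebra of $A^{min}$ itself. Substituting $(P A^{min})_q = 0$ for $q\neq p^k$ into the formula above annihilates every column whose index is not a prime power, which is the assertion.

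Conceptually the only substantive inputs are this concentration result and the identification of the first page in Corollary~\ref{SSDecompositionCorollary}; granted both, the corollary is formal. The genuine difficulty lies upstream, in the realization theorem~\ref{simplicialRealizationTheorem} underlying Corollary~\ref{SSDecompositionCorollary}: one must lift the minimal idempotent $f\colon T\to T$ to an honest self-map of the lower $p$-central tower $\Gamma_p$, which is precisely what the simplicial James--Hopf machinery of Sections~2 and~3 provides. With the sub-spectral sequence in hand, the acceleration of its columns is immediate, and the classical acceleration of the unstable Adams spectral sequence for $\Omega S^{n+1}$ of \cite{6authors} is recovered as the special case noted in the introduction.
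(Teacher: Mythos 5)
Your proposal matches the paper's own treatment: the paper states the corollary as a direct consequence of Corollary~\ref{SSDecompositionCorollary}, with the identification $E^1_{p,q}(\EuScript{A}^{min})=\pi_p(PA^{min})_q$ combined with the concentration of $PA^{min}$ in degrees $p^k$ quoted from \cite{SelickWuCoalg}, exactly as you do. No gaps; the argument is the same.
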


\bigskip
\footnotesize
\noindent\textit{Acknowledgments.}
The research partially supported by the Singapore Ministry
of Education research grant (AcRF Tier 1 WBS No. R-146-000-222-112). The second author is also partially supported by a grant (No. 11329101) of NSFC of China.

\bibliographystyle{gtart}

\end{document}